 \let\@fnsymbol\@arabic
\newtheorem{thm}{Theorem}[section]
\newtheorem{corollary}[thm]{Corollary}
\newtheorem{lemma}[thm]{Lemma}
\newtheorem{prop}[thm]{Proposition}
\newtheorem{obs}[thm]{Observation}
 \let\ovl\overline
 \let\al\alpha
 \let\ga\gamma
 \let\ep\epsilon
 \let\Ga\Gamma
 \let\Si\Sigma
 \def\mR{{\mathbb R}}
 \def\mZ{{\mathbb Z}}
 \def\cN{{\fam0 N}}
 \def\cS{{\fam0 S}}
 \let\vP=P
 \let\vQ=Q
 \let\vX=X
 \let\vY=Y
 \def\double#1{{#1}[\,\ovl{K_2}\,]}
 \long\def\ignore#1{}
 \let\eH\Theta
 \let\eJ\Psi
 \let\diamsum\Diamond
 \long\def\ignore#1{}
 \long\def\noignore#1{#1}
 \def\hnull{\leavevmode\vrule height0pt depth0pt width0pt}
 \def\og{g}
 \let\tw\widetilde
 \def\ng{{\tw g}}
 \def\even{_{\fam0 even}}
 \let\ok\Theta
 \def\nk{\tw\Theta}
 \let\ec\varepsilon
 \let\ab\allowbreak
 \newdimen\rotsysstrutheight \rotsysstrutheight=18pt
 \newdimen\rotsysfinalskip   \rotsysfinalskip=5pt
 \def\rotsys#1{%
  \raise\rotsysstrutheight\hbox{\vtop{%
   \hrule height0pt depth0pt width1pt
   \hbox{\normalfont%
    \tabcolsep=2pt
    \def\arraystretch{0.8}%
    \begin{tabular}{lllllllllllllllll}%
     \vrule height\rotsysstrutheight depth0pt width0pt
     #1
     \noalign{\vskip\rotsysfinalskip}
    \end{tabular}
  }}}
 }
 \definecolor{dkred}{rgb}{0.7,0,0}
 \def\shortversion{%
    \let\shortonly\noignore
    \let\longonly\ignore
    \long\def\shortlong##1##2{{}##1}%
 }
 \def\longversion{%
    \let\shortonly\ignore
    \long\def\longonly##1{{{}##1}}%
    \long\def\shortlong##1##2{{}##2}%
 }
 \def\longversionred{%
    \let\shortonly\ignore
    \long\def\longonly##1{{\color{dkred}{}##1}}%
    \long\def\shortlong##1##2{{\color{dkred}{}##2}}%
 }
\begin{document}
 \title{Quadrangular embeddings of complete graphs and the Even Map
Color Theorem\longonly{\ (with details)}%
 \thanks{%
 The United States Government is authorized to reproduce and distribute
reprints notwithstanding any copyright notation herein.}}
 \author{
 Wenzhong Liu%
 \thanks{Department of Mathematics, Nanjing University of
Aeronautics and Astronautics, Nanjing 210016, China. Email:
\texttt{wzhliu7502@nuaa.edu.cn}.},\;\;
 Serge~Lawrencenko\thanks{
Russian State University of Tourism and Service,
Institute for Tourism and Hospitality,
Bldg. 32A, Kronstadt Boulevard, Moscow, 125438, Russia.
 Email: \texttt{lawrencenko@hotmail.com}.
 },\;\;
 Beifang~Chen\thanks{
Hong Kong University of Science and Technology,
Clear Water Bay, Kowloon, Hong Kong, China.
 Email: \texttt{mabfchen@ust.hk}.
 },\;\;
 M.~N.~Ellingham\thanks{
Department of Mathematics, Vanderbilt University, Nashville, TN 37240,
USA, Email: \texttt{mark.ellingham@vanderbilt.edu}.%
 },\\
 Nora Hartsfield\thanks{
Deceased.
},\;
 Hui Yang\thanks{
Department of Mathematics,
Guizhou University, Guiyang, 550025, China.
 Email: \texttt{hui-yang@163.com}.
 },\;\;
 Dong Ye\thanks{Department of Mathematical Sciences, Middle Tennessee
State University, Murfreesboro, TN 37132, USA,
Email: \texttt{dong.ye@mtsu.edu}.}\;\;  and
 Xiaoya Zha\thanks{Department of Mathematical Sciences, Middle Tennessee
State University, Murfreesboro, TN 37132, USA,
Email: \texttt{xiaoya.zha@mtsu.edu}.}
 }

\date{2 June 2016; revised 19 October 2018 and 22 July 2021}
\maketitle

\begin{abstract}
 Hartsfield and Ringel constructed orientable quadrangular embeddings of
the complete graph $K_n$ for $n\equiv 5 \pmod 8$, and nonorientable ones
for $n \ge 9$ and $n\equiv 1 \pmod 4$.
 These provide minimal quadrangulations of their underlying surfaces.
 We extend these results to determine, for every complete graph $K_n$,
$n \ge 4$, the minimum genus, both orientable and
nonorientable, for the surface in which $K_n$ has an embedding with all
faces of degree at least $4$, and also for the surface in which $K_n$
has an embedding with all faces of even degree.
 These last embeddings provide sharpness examples for a result of
Hutchinson bounding the chromatic number of graphs embedded with all
faces of even degree, completing the proof of the Even Map Color Theorem.
 We also show that if a connected simple graph $G$ has a perfect
matching and a cycle then the lexicographic product $G[K_4]$ has
orientable and nonorientable quadrangular embeddings; this provides new
examples of minimal quadrangulations.

 \medskip

\noindent {\em Keywords:} quadrangular embedding, complete graph,
minimal quadrangulation, $4$-genus, even-faced embedding, map coloring,
chromatic number.

\end{abstract}


 \section{Main results}
 \label{sec:mainres}

 In this paper surfaces are connected compact $2$-manifolds without
boundary.  
 The orientable surface of genus $h$ is denoted $S_h$, and the
nonorientable surface of genus $k$ is denoted $N_k$.
 The Euler characteristic of a surface $\Si$ is denoted $\ec(\Si)$,
which is $2-2h$ for $S_h$, and $2-k$ for $N_k$.

 Frequently we want embeddings of a given graph with minimum
genus, which have faces that are small, often triangular faces.
 In particular, the determination of the minimum genus of complete
graphs as part of the Map Color Theorem \cite{R} was one of the driving
forces behind the development of topological graph theory.
 However, we can also consider the minimum genus of embeddings with
restrictions on face degrees.  In this paper we consider embeddings
where all faces have degree at least $4$, or all faces have even degree.
 Euler's formula and face/edge counting imply the following.

 \begin{obs}\label{obs:quadedges}
 If $\Phi$ is an embedding of an $n$-vertex $m$-edge graph in a surface
$\Si$ with all faces of degree at least $4$, then $m \le 2n-2\ec(\Si)$,
with equality if and only if the embedding is cellular and every face
degree is $4$.  For a complete graph $K_n$, $n \ge 4$, such an embedding
has $n(n-5) \le -4\ec(\Si)$, with equality if and only if the embedding
is quadrangular.
 \end{obs}

 In this paper we completely resolve the question of the minimum genus
of a surface in which $K_n$ has an embedding with all faces of degree at
least $4$, or with all faces of even degree.  These results also
complete the proof of a coloring result, the Even Map Color Theorem.  
 In 1975 Hutchinson \cite{H75} showed that the chromatic number bound of
the Map Color Theorem can be significantly improved for even-faced
embeddings; our results improve her bound in one case and provide
sharpness examples.
 We also provide some constructions for minimal quadrangulations, simple
quadrangulations with a minimum number of vertices in a given surface.

 Our main results are as follows (see Section \ref{sec:prelim} for
definitions not stated here).
 For a graph $G$ and positive integer $d$, the {\em orientable $d$-genus
$\og_d(G)$} and the {\em orientable even-faced genus $\og\even(G)$} are
the smallest $h \ge 0$ for which $G$ has a cellular embedding in $S_h$
with all face degrees at least $d$, or with all face degrees even,
respectively.  We can similarly define the {\em nonorientable $d$-genus
$\ng_d(G)$} and the {\em nonorientable even-faced genus $\ng\even(G)$}
(for convenience we take $\ng_d(G)$ or $\ng\even(G)$ to be $0$ if $G$
has a suitable planar embedding).

 \begin{thm}\label{thm:genusori}
 Let $f(n) = 1 + \lceil n(n-5)/8\rceil$.  Then
 $$\og_4(K_n) = f(n) \text{\quad if $n \ge 4$,}
 \text{\qquad and\qquad}
 \og\even(K_n) = 
    \begin{cases}
        f(n) & \text{if $n \ge 4$ and $n \ne 6$,} \\
        f(6)+1=3 & \text{if $n=6$.} \\
 \end{cases}$$
 For $n = 5$, for $n \ge 7$, and for $\og_4(K_6)$ there is a
face-simple closed-2-cell embedding of $K_n$ realizing each equation.
 Such an embedding is quadrangular if and only if $n \equiv 0$ or $5
\pmod 8$.
 \end{thm}

 \begin{thm}\label{thm:genusnon}
 Let ${\tw f}(n) = 2 + \lceil n(n-5)/4\rceil$.  Then
 $$\ng_4(K_n) = \ng\even(K_n) = 
    \begin{cases}
        {\tw f}(n) & \text{if $n \ge 4$ and $n \ne 5$,} \\
        {\tw f}(5)+1=3 & \text{if $n=5$.} \\
 \end{cases}$$
 For $n = 4$ and for $n \ge 6$ there is a closed-2-cell
embedding of $K_n$ realizing this pair of equations, that is face-simple
if $n \ge 6$.
 Such an embedding is quadrangular if and only if $n \equiv 0$ or $1
\pmod 4$.
 \end{thm}

 Let $\chi(\Phi)$ and $\chi^*(\Phi)$ denote the number of colors needed
to properly vertex-color or face-color, respectively, a graph embedding
$\Phi$.  We will ignore loops when vertex-coloring and {\em monofacial}
edges (with the same face on both sides) when face-coloring, so $\chi$
and $\chi^*$ are defined for all embeddings.  

 \begin{thm}[Even Map Color Theorem]\label{thm:evenmct}
 Let $\Phi$ be a (not necessarily cellular) embedding of a (not
necessarily connected) graph (loops and multiple edges allowed) in a
surface $\Si$.  Define
 $$H\even(\Si) = \left\lfloor \frac{5 + \sqrt{25-16\ec(\Si)} }{2}
		\right\rfloor
 \;\text{if $\Si \ne S_0$,}\quad\text{and}\quad
 c(\Si) = \begin{cases}
	2 & \text{if $\Sigma = S_0$,} \\
	H\even(\Si)-1 & \text{if $\Sigma = N_2$ or $S_2$,} \\
	H\even(\Si) & \text{otherwise.} \\
 \end{cases}
 $$

 \noindent
 (a) If every face of $\Phi$ has even degree (individual face boundary
components may have odd length), then (ignoring loops when coloring)
$\chi(\Phi) \le c(\Si)$.

 \noindent
 (b) If every vertex of $\Phi$ has even degree, then (ignoring
monofacial edges when coloring) $\chi^*(\Phi) \le c(\Si)$.

 Moreover, for every surface $\Si$ there exist face-simple
closed-$2$-cell embeddings of connected simple graphs, that are
quadrangular for (a) and $4$-regular for (b), which show that these
bounds are sharp.
 \end{thm}

 The following provides new constructions of minimal quadrangulations,
as well as giving an alternative proof of some cases of Theorems
\ref{thm:genusori} and \ref{thm:genusnon}.

 \begin{thm}\label{thm:compk4}
 Let $G$ be a connected simple graph with a perfect matching.  Then
$G[K_4]$ has a face-simple orientable quadrangular embedding.  Moreover,
if $G$ also has a cycle, then $G[K_4]$ also has a face-simple
nonorientable quadrangular embedding.
 \end{thm}

 Section \ref{sec:backgd} provides some background to our results, and
Section \ref{sec:prelim} provides precise definitions and preliminary
results.  Section \ref{sec:embdiamsum} proves Theorems
\ref{thm:genusori} and \ref{thm:genusnon}, and Section \ref{sec:color}
proves the Even Map Color Theorem.  Section \ref{sec:embgsvg} proves
Theorem \ref{thm:compk4}, and Section \ref{sec:minquad} shows that
results from Sections \ref{sec:embdiamsum} and \ref{sec:embgsvg} yield
minimal quadrangulations.  Section \ref{sec:conclusion} contains some
final remarks.

 \shortlong%
 {A version of this paper with some additional details has been posted
on the \texttt{arXiv} \cite{LLCEHYYZp}.}%
 {This version of this paper contains some details not included in the
published version
\cite{LLCEHYYZ}.}

 \section{Background}
 \label{sec:backgd}

 The minimum genus of the complete graph $K_n$ and conditions for the
existence of triangular embeddings of $K_n$ were determined as part of
the well-known Map Color Theorem \cite{R}, which extended the Four
Color Theorem to other surfaces.
 Subsequently there were a number of results showing existence of
multiple triangular embeddings of certain complete graphs,
such as \cite{AB92, LNW, Y70}, and then providing lower bounds on the
number of nonisomorphic triangular embeddings of $K_n$ for certain
families of $n$, such as \cite{BGGS, GGS, GK, KV}.
 %

 Less work has been done on quadrangular embeddings of complete graphs,
or embeddings of complete graphs with all faces of degree at least $4$,
or all faces of even degree.  By Observation \ref{obs:quadedges},
$n(n-5)=-4\ec(\Si)$ when there is a quadrangulation of $K_n$ in $\Si$,
which means that $n \equiv 0$ or $5 \pmod 8$ in the orientable case, and
$n \equiv 0$ or $1 \pmod 4$ in the nonorientable case.
 Hartsfield and Ringel \cite{HRori, HRnon} obtained the following
results, mostly using current graphs, covering half of the possible
values of $n$ for which quadrangular embeddings of $K_n$ may exist.

 \begin{thm}[Hartsfield and Ringel \cite{HRori, HRnon}]\label{thm:HR}
 A complete graph $K_n$ with $n=8$ or $n\equiv 5\pmod 8$ has a face-simple orientable
quadrangular embedding.
 A complete graph $K_n$ with $n \ge 9$ and $n\equiv 1 \pmod 4$ has a
face-simple nonorientable quadrangular embedding.  However,
$K_5$ has no nonorientable quadrangular embedding.
 \end{thm}

 The embeddings in Theorem \ref{thm:HR} are minimal quadrangulations.
 Hartsfield and Ringel also constructed quadrangular embeddings of the
generalized octahedron $O_{2k} = K_k[\overline{K_2}]$ that are minimal.
 We discuss minimal quadrangulations in more detail in
Section \ref{sec:minquad}.
 The fact that $K_5$ has no nonorientable quadrangular embedding was
also proved earlier (in dual form) by Hutchinson \cite{H75}.

 Using current graphs, Korzhik and Voss \cite{KV} constructed
exponentially many nonisomorphic orientable quadrangular embeddings of
$K_{8s+5}$ for $s\ge 1$, and 
 Korzhik \cite{K} constructed superexponentially many nonisomorphic
orientable and nonorientable quadrangular embeddings of $K_{8s+5}$ for
$s \ge 2$.
 Grannell and McCourt \cite{GM} constructed many nonisomorphic
orientable embeddings of complete graphs $K_n$ with faces bounded by
$4k$-cycles for $k \ge 2$, when $n=8ks+4k+1$ for $s\ge 1$.

 It is natural to ask whether the results in Theorem \ref{thm:HR} can be
extended to the other cases where quadrangular embeddings of $K_n$ might
exist, namely $n \equiv 0 \pmod 8$ for orientable embeddings, and $n
\equiv 0 \pmod 4$ for nonorientable embeddings.
 When $K_n$ has a quadrangular embedding it is a minimal
quadrangulation, and realizes $\og_4(K_n)$ and $\og\even(K_n)$, or
$\ng_4(K_n)$ and $\ng\even(K_n)$.  But we can also try to determine
these parameters even if $K_n$ does not have a quadrangular embedding. 
Our Theorems \ref{thm:genusori} and \ref{thm:genusnon} resolve all
of these questions, and we provide new proofs for the existence results
in Theorem \ref{thm:HR}.

 Some explanation of the origins of this paper is appropriate.
 In the early 1990s one of us, Hartsfield, developed a technique for
constructing quadrangulations by ``adding handles using diagonals''.  
 She used this technique to derive a number of results on quadrangular
embeddings, including that $K_n$ has a nonorientable quadrangular
embedding when $n \equiv 0 \pmod 4$ \cite{Ha94}.  She also applied this
to derive results on $\ng_4(K_n)$ and $\ng\even(K_n)$ in a paper that
was submitted for publication in 1994 \cite{Ha94q}.
 As indicated in \cite{H95}, Hartsfield was aware that her results would
give sharpness examples for Hutchinson's coloring results \cite{H75}.
 Hartsfield's papers \cite{Ha94, Ha94q} outlined proofs (providing basis
cases and examples of inductive steps, such as from $K_8$ to $K_{16}$)
but did not give complete general arguments.

 In the late 1990s three of us, Chen, Lawrencenko and Yang (CLY),
derived results on $\og_4(K_n)$ using current graphs \cite{CLY98, LCY18}.
 These were submitted for publication in 1998.
 When Hartsfield and CLY discovered they had been working on similar
results, they decided to combine their results into a single paper. 
 Unfortunately, this single paper was never finished.  Some researchers
were aware of the results of Hartsfield (cited in \cite{H95})
and of CLY (cited in \cite{Su05}) but they were not publicly
available.

 Around 2015 the remaining four authors, Ellingham, Liu, Ye and Zha
(ELYZ), worked on some problems of Craft \cite{DC95} on quadrangular
embeddings of composition graphs.  ELYZ realized that their
constructions (see Section \ref{sec:embgsvg}) provided orientable
quadrangular embeddings for $K_n$ with $n \equiv 0 \pmod 8$, which did
not seem to be in the literature.  ELYZ also came up with a diamond
sum construction (see Section \ref{sec:embdiamsum}) for nonorientable
quadrangular embeddings of $K_n$ for $n \equiv 0 \pmod 4$.  ELYZ's
results were written up \cite{LEYZ16} and submitted in 2016.
 After submission of their paper ELYZ were informed of the earlier
unpublished results of Hartsfield and CLY.  It was decided to combine
all of the results into the present joint paper.  Although Nora
Hartsfield died in 2011 we think it is appropriate to include her as an
author.

 We hope that eventually the other proofs of Theorems \ref{thm:genusori}
and \ref{thm:genusnon} using Hartsfield's diagonal technique and current
graphs will also appear.
 For the current graph results, some modification of the index $2$
current graphs in \cite{CLY98, LCY18} is required, and we hope to
provide nonorientable constructions as well as orientable ones.
 A paper using a combination of current graphs and Hartsfield's diagonal
technique is in preparation \cite{LCYHp} and additional papers may
follow.

 \section{Preliminaries}
 \label{sec:prelim}

 \subsection{Graph embeddings}
 \label{ss:graphemb}

 Our graphs may have loops or multiple edges; {\em simple\/} graphs have
neither.
 We say a graph embedding has some graph property (such as
bipartiteness) if the underlying graph has this property.
 A face of a graph embedding is {\em cellular\/} if it is homeomorphic
to an open disk. 
 We often identify a cellular face by referring to its
bounding cycle or bounding closed walk.
 A graph embedding is {\em cellular\/} if every face is cellular, {\em
closed-$2$-cell} if it is cellular and every face is bounded by a cycle
(with no repeated vertices), and {\em face-simple} if every two distinct
faces share at most one boundary edge.
 In this paper all embeddings are cellular unless we specifically refer
to a {\em general} embedding, which means that faces may have multiple
boundary components and internal handles or crosscaps.

 Suppose $\Phi$ is a graph embedding in surface $\Si$.
 The {\em degree\/} of a face is the number of sides of edges with which
it is incident.
 A $k$-face is a face of degree $k$, and a {\em $C_k$-face\/} is a
cellular face bounded by a $k$-cycle.   A cellular $k$-face is bounded
by a single closed walk of length $k$, which may or may not be a
$k$-cycle.
 The minimum vertex degree and minimum face degree of $\Phi$ are denoted
$\delta(\Phi)$ and $\delta^*(\Phi)$, respectively.
 The embedding $\Phi$ is {\em even-vertexed} or {\em even-faced} if
every vertex or every face, respectively, has even degree.
 An even-faced noncellular embedding may have individual face boundary
component walks of odd length, as long as the overall degree of each
face is even.
 We say $\Phi$ is {\em quadrangular\/}, or a {\em quadrangulation of
$\Si$\/}, if every face is a $C_4$-face.
 We also refer to a $C_4$-face as a {\em quadrilateral}.
 A quadrangulation of $\Si$ is \emph{minimal} if its underlying graph is
simple and connected, and there is no quadrangular embedding of a simple
graph of smaller order in $\Si$.
 Similarly, a {\em triangulation} of a surface $\Si$ is an embedding of
a graph in $\Si$ such that every face is a $C_3$-face.

 \begin{lemma}[Euler's inequality]
 \label{lem:eulerineq}
 Suppose we have a general embedding of a graph $G$ in a surface of
Euler characteristic $\ec$, with $n$ vertices, $m$ edges and $r$ faces. 
Then $n - m + r \ge \ec$, with equality if and only if the embedding is
cellular,
 \end{lemma}

 \begin{obs}\label{obs:F}
 Suppose $\Phi$ is a general embedding of a simple graph $G$ and
$\delta(\Phi) \ge 2$.  Then every face of $G$ of degree at most $5$ is
bounded by a single cycle.
 \end{obs}

 \begin{obs}\label{obs:F2}
 Suppose $\Phi$ is a general even-faced embedding of a simple connected
graph on at least three vertices.
 Then every face boundary walk has length at least $3$, and hence
$\delta^*(\Phi) \ge 4$.
 \end{obs}

 \begin{obs}\label{obs:FS}
 Suppose $\Phi$ is a quadrangular embedding of a simple connected graph
and $\delta(\Phi) \ge 3$.
 If $\Phi$ is not face-simple then it contains two faces of the form
$(uvwx)$ and $(uvxw)$.  Thus, if $\Phi$ is orientable or bipartite then
it is face-simple.
 \end{obs}

 \subsection{Graph operations}

 Let $G$ and $H$ be simple graphs.
 The complement of $G$ is denoted $\ovl{G}$.
 The {\em composition} (or {\em
lexicographic product}) of $G$ and $H$, denoted $G[H]$, has vertex set
$V(G)\times V(H)$, with two vertices $(v_1, w_1)$ and $(v_2,w_2)$
adjacent if and only if either (i) $v_1v_2\in E(G)$ or (ii) $v_1=v_2$
and $w_1w_2\in E(H)$. For example,
 $K_n[K_2]$ is the complete graph $K_{2n}$,
 and $K_n[\,\ovl{K_2}\,]$ is the generalized octahedron
$O_{2n}=K_{2n}-nK_2$.
 The {\em join} of $G$ and $H$, denoted $G+H$, is the union of $G$ and
$H$ together with one edge $uv$ for each $u \in V(G)$ and $v \in V(H)$.
 For example, $K_4+K_n$ is the complete graph $K_{n+4}$.

 \subsection{The diamond sum}

Let $G$ and $G'$ be two simple graphs with embeddings $\Phi$ and $\Phi'$
in disjoint surfaces $\Si$ and $\Si'$, respectively.
 Suppose that $k \ge 1$ and both $G$ and $G'$ have a vertex of degree $k$,
say $v$ and $v'$ respectively. Let $v$ have neighbors $v_0, v_1, . . . ,
v_{k-1}$ in cyclic order around $v$ in $\Phi$, and let $v'$ have
neighbours $v_0', v_1',..., v_{k-1}'$ in cyclic order around $v'$ in
$\Phi'$. There is a closed disk $D$ that intersects $G$ in $v$ and the
edges $vv_0, vv_1,..., vv_{k-1}$, and so that the boundary of $D$
intersects $G$ at $v_0, v_1,..., v_{k-1}$.  Similarly, there is a closed
disk $D'$ that intersects $G'$ in $v'$ and the edges $v'v_0', v'v_1', .
. ., v'v_{k-1}'$ and so that the boundary of $D'$ intersects $G'$ at
$v_0', v_1',..., v_{k-1}'$. Remove the interiors of $D$ and $D'$, and
identify  their boundaries so that $v_i$ is identified with $v_i'$ for
$0\le i\le  k-1$.
 The resulting embedding is called a {\em diamond sum of $\Phi$ and
$\Phi'$ at $v$ and $v'$\/}, denoted $\Phi \diamsum_{v,v'} \Phi'$ or just
$\Phi \diamsum \Phi'$.  Its graph is denoted $G \diamsum G'$ and the
surface is the connected sum $\Si \# \Si'$.
 Note that $\Phi\diamsum
\Phi'$ is orientable if and only if both $\Phi$ and $\Phi'$ are orientable.

 The diamond sum was first used by Bouchet \cite{Bo78}
in dual form to derive a new proof of the minimum genus of $K_{m,n}$.
 Bouchet's construction was later reinterpreted in more general
situations in \cite{KSZ,MMP,MPP}.

 The diamond sum of two cellular embeddings is cellular.
 It is also not difficult to see that if $\Phi$ and $\Phi'$ are
quadrangular and the diamond sum $\Phi \diamsum \Phi'$ is simple, then
$\Phi \diamsum \Phi'$ is also quadrangular.
 To build embeddings in Section \ref{sec:embdiamsum} that are
face-simple and closed-2-cell, we rely on the following  technical
extension of this observation, which allows $\Phi'$ to contain
non-$C_4$-faces.

 \begin{lemma}\label{lem:diamsumquad}
 Suppose $\Phi$ is a face-simple quadrangular embedding of a simple
graph $G$, $\delta(\Phi) \ge 3$, $v \in V(G)$, and the neighbors of $v$
in $G$ are independent.
 Suppose $\Phi'$ is a closed-2-cell embedding of a simple graph $G'$,
$v' \in V(G')$, and every pair of distinct faces of $\Phi'$ shares at
most one edge of $G'-v'$.
 Then $\Phi'' = \Phi \diamsum_{v,v'} \Phi'$ is a face-simple closed-2-cell
embedding and there is a degree-preserving bijection between the
non-$C_4$-faces in $\Phi'$ and the non-$C_4$-faces in $\Phi''$.
 \end{lemma}

 Note that the condition on pairs of distinct faces of $\Phi'$ holds if
$\Phi'$ is face-simple.

 \begin{proof}
 There are three types of faces in $\Phi''$: (1) those that use only edges
of $G-v$; (2) those that use edges of both $G-v$ and $G'-v'$; and (3)
those that use only edges of $G'-v'$.
 Represent the faces of $\Phi$ using $v$ as $Z_i = (v v_i w_i v_{i+1})$
and the faces of $\Phi'$ using $v'$ as $Z_i' = (v' v_i' \ldots
v_{i+1}')$, for $0 \le i \le k-1$, taking subscripts modulo
$k$.  Since the neighbors of $v$ are independent, $w_i$ is not a
neighbor of $v$.

 All faces in $\Phi$ and $\Phi'$ are bounded by cycles since $\Phi$ and
$\Phi'$ are closed-2-cell, which implies that faces of type (1) and (3)
are bounded by cycles, and all $Z_i$ and $Z_i'$ are cycles.
 Thus, every face of type (2) is a face $Z_i''$ obtained by combining
paths $Z_i-v = v_i w_i v_{i+1}$ and $Z_i'-v'$ by identifying $v_i$ with
$v_i'$ and $v_{i+1}$ with $v_{i+1}'$.
 Since $w_i$ is not a neighbor of $v$, it is not identified
with any vertex of $Z_i'-v'$, so $Z_i''$ is a cycle, and of the same
length as $Z_i'$.  Thus, $\Phi''$ is closed-2-cell.

 Since all faces of type (1) are $C_4$-faces, mapping each
non-$C_4$-face $Z_i'$ to $Z_i''$ and each non-$C_4$-face of type (3) to
itself gives the required degree-preserving bijection for
non-$C_4$-faces.

 Let $m_{st}$ be the maximum number of edges shared by a face of type
($s$) and a distinct face of type ($t$).  Clearly $m_{13}=0$; since
$\Phi$ is face-simple, $m_{11}, m_{12} \le 1$; and by the hypothesis on
$\Phi'$, $m_{32}, m_{33} \le 1$.
 Consider two arbitrary distinct faces
$Z_i'', Z_j''$ of type (2). Suppose $Z_i-v$ and $Z_j-v$ share an edge
$ab$.  Since $\Phi$ is simple and $\delta(\Phi) \ge 3$, we cannot have
$Z_i=(vabc)$ and $Z_j=(vabd)$, so we may assume that we have
$Z_i=(vabc)$ and $Z_j=(vbad)$.  But then $a$ and $b$ are adjacent
neighbors of $v$, a contradiction.
 Therefore, $Z_i''$ and $Z_j''$ share no edges of $G-v$, and by
the hypothesis on $\Phi'$ they share at most one edge of
$G'-v'$, so $m_{22} \le 1$, and $\Phi''$ is face-simple.
 \end{proof}

 \subsection{Graphical surfaces}
 \label{ss:graphical}

 White \cite{Wh72} showed that any composition $\double{G}$, where $G$
is a simple graph without isolated vertices, has an orientable
quadrangular embedding.  Craft \cite{DC91,DC98} developed graphical
surfaces, which yield a simple proof of this result.  We outline his
proof, since we need his construction in Section \ref{sec:embgsvg}.

 For a graph $G$, the {\em graphical surface $S(G)$} derived from $G$ is
a surface obtained from an embedding of $G$ in $\mR^3$ by blowing up
every vertex $u$ into a sphere $\Si_u$ and replacing every edge $uv$ by
a tube $T_{uv}$ joining the spheres $\Si_u$ and $\Si_v$.  Since we work
in $\mR^3$, the resulting surface $S(G)$ is orientable.

 \begin{lemma}[Craft \cite{DC91,DC98}]\label{lem:compk2ori}
 Let $G$ be a connected simple graph. Then
$G[\,\ovl{K_2}\,]$ has a quadrangular embedding in the graphical
surface $S(G)$.
 \end{lemma}

 \begin{proof}[Outline of proof]
 We embed $\double{G}$ in $S(G)$ as
follows.  For any vertex $u \in V(G)$, let $u_\cN$ (north pole) and
$u_\cS$ (south pole) be two points in the sphere $\Si_u$; they
represent the two vertices of $\double{G}$ corresponding to $u$.
 We may assume that all tubes joined to $\Si_u$ are joined in some
cyclic order around the equator of $\Si_u$.
 There are four edges of $\double{G}$ corresponding to each $uv
\in E(G)$, which are $u_\cN v_\cN, u_\cN v_\cS, u_\cS v_\cS$ and $u_\cS
v_\cN$.
 These can all be embedded along $T_{uv}$ (there are two different ways
to do this, but for our purposes it will not matter which is used).
 In the resulting embedding, every edge is contained in two quadrilaterals.
 For example, $u_\cS v_\cN$ is contained in quadrilaterals
$Q_u=(t_\vX u_\cN v_\cN u_\cS)$ and $Q_v=(u_\cS v_\cN w_\vY v_\cS)$ where
$tu, vw \in E(G)$ and
$\vX, \vY \in \{\cN, \cS\}$.
 Note that if $u$ has degree $1$, then $t_\vX = v_\cS$.
 \end{proof}

 White and Craft dealt only with orientable embeddings.  However, we can
also produce nonorientable embeddings.
 Given a graphical surface, we can replace a tube $T_{uv}$ by a {\em
twisted tube\/} $\tw{T}_{uv}$ by taking a simple closed curve $\ga$
around $T_{uv}$ with a specified positive direction, cutting along it to
produce two boundary curves $\ga_1$ and $\ga_2$, then re-identifying
$\ga_1$ with $\ga_2$ so that the positive direction along $\ga_1$
corresponds to the negative direction along $\ga_2$ (this cannot be done
in $\mR^3$).  We can still embed the four edges between $\{u_\cN,
u_\cS\}$ and $\{v_\cN, v_\cS\}$ along $\tw{T}_{uv}$; they become
orientation-reversing edges relative to the original orientation at each
vertex.  Depending on which tubes we replace, the resulting embedding
may be nonorientable.

 \begin{lemma}\label{lem:compk2non}
 Let $G$ be a connected simple graph with at least one
cycle.  Then $\double{G}$ has a quadrangular embedding in a
nonorientable modified graphical surface $\tw{S}(G)$.
 \end{lemma}

 \begin{proof}
 Choose one edge $uv$ belonging to a cycle and replace the tube $T_{uv}$
by a twisted tube $\tw{T}_{uv}$ in the construction of Lemma
\ref{lem:compk2ori}.  The resulting embedding is nonorientable because
the cycle $(uvw \ldots z)$ in $G$ gives an orientation-reversing cycle
$(u_\cN v_\cN w_\cN \ldots z_\cN)$ in the embedding of $\double{G}$ in
the new surface $\tw{S}(G)$.
 \end{proof}


\subsection{Voltage graphs}

 We assume the reader is familiar with voltage graph constructions for
embeddings.  We summarize the main features; see \cite{GT} for more details.


 Given a graph $G$, assign an arbitrary {\em plus direction} to each
edge.
 A function $\al$ from the plus-directed edges of $G$ to a group $\Ga$
is an {\em ordinary voltage assignment} on $G$.
 The pair $\langle G, \al \rangle$ is called an {\em ordinary voltage
graph\/}.
 The {\em derived graph} $G^\al$ has vertex set $V(G)\times \Gamma$ and
an edge from $u_a=(u, a)$ to $v_b=(v,b)$ whenever $uv$ is a
plus-directed edge in $G$ and $b=a \cdot \alpha(uv)$.

 If $G$ has an embedding $\Phi$, represented by a rotation of edges at
each vertex and edge signatures, then $G^\al$ has a {\em derived
embedding $\Phi^\al$}:
 for each $u_a \in V(G^\al)$ use the natural bijection between edges
incident with $u$ in $G$ and edges incident with $u_a$ in $G^\al$ to
define the rotation at $u_a$ from the rotation at $u$, and give each
edge $u_av_b$ of $G^\al$ the signature of the corresponding edge $uv$ in
$G$.

 For each walk $W= v_0 e_1 v_1 e_2 v_2 \ldots e_k v_k$ in $G$ define its
{\em total voltage\/} to be $\al(e_1)^{\ep_1} \al(e_2)^{\ep_2} \ldots
\al(e_k)^{\ep_k}$ where $\ep_i$ is $+1$ if $W$ uses $e_i$ in the plus
direction and $-1$ otherwise.  
 The faces of $\Phi^\al$ come from the faces of $\Phi$: each face in
$\Phi$ with degree $k$ whose boundary walk has total voltage of order
$r$ in $\Ga$ yields $|\Ga|/r$ faces of degree $kr$ in $\Phi^\al$.
 Also, $\Phi^\al$ is nonorientable if and only if $\Phi$ is
nonorientable and has an orientation-reversing closed walk whose total
voltage is the identity of $\Ga$.

 \section{Embeddings from diamond sums} 
 \label{sec:embdiamsum}

 In this section we prove Theorems \ref{thm:genusori} and
\ref{thm:genusnon} by constructing embeddings of minimum genus with all face
degrees at least $4$, and with all face degrees even, for each complete
graph $K_n$, $n \ge 4$.
 Our constructions are inductive.
 The base cases are provided in \shortlong{an appendix}{Appendix
\ref{sec:small}}.

 The induction steps use quadrangular embeddings of complete bipartite
graphs and of $K_7^+$ and $K_{11}^+$, where $K_n^+$ denotes the graph
obtained from $K_n$ by subdividing an edge.
 In Figure \ref{fig:K7+} we provide embeddings $\tw\Psi_7$ of $K_7$ in
$N_5$ (as a polygon with labeled vertices, indicating how edges are to
be identified around the boundary) and $\Psi_{11}$ of $K_{11}$ in $S_9$
(as a rotation system; see \cite[Section 3.2]{GT}). 
 Each embedding is face-simple and all faces are $C_4$-faces apart from
two $C_3$-faces that share an edge $xy$ ($xy=01$ for $\tw\Psi_7$ and
$xy=56$ for $\Psi_{11}$).
 Nonorientability of $\tw\Psi_7$ follows from the fact that there are
edges, such as 05, used twice in the same direction around the outer
boundary of the polygon.
 By subdividing $xy$ with a vertex $z$ in each case, we obtain
embeddings $\tw\Psi_7^+$ and $\Psi_{11}^+$ of graphs $K_7^+$ and
$K_{11}^+$.  These embeddings are not face-simple, but with the choice
$v'=x$ they satisfy the hypotheses for $\Phi'$ in Lemma
\ref{lem:diamsumquad}.

 \begin{figure}[!hbtp]\refstepcounter{figure}\label{fig:K7+}
 \begin{center}

 \leavevmode%
 $\vcenter{\hbox{%
  \includegraphics[scale=1]{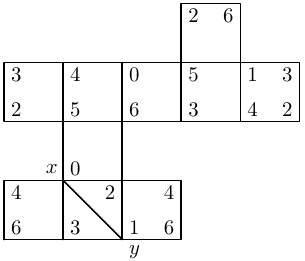}%
 }}$
 \hskip 20mm
 $\vcenter{\hbox{%
  \rotsys{%
   0.  1 3 4 6 a 9 8 7 5 2 \\
   1.  0 8 7 a 9 3 2 5 6 4 \\
   2.  0 1 5 6 4 3 9 a 7 8 \\
   3.  0 4 6 5 1 2 a 9 7 8 \\
   4.  0 8 6 5 7 9 a 2 3 1 \\
   \llap{$x=$ }%
   5.  0 9 a 4 6 2 3 7 8 1 \\
   \llap{$y=$ }%
   6.  0 1 8 7 3 2 5 4 a 9 \\
   7.  0 2 1 5 6 3 4 a 8 9 \\
   8.  0 7 9 a 4 3 6 5 1 2 \\
   9.  0 4 3 8 a 6 2 1 5 7 \\
   a.  0 9 7 5 1 2 6 8 3 4 \\
  }
 }}$

 {Figure~\ref{fig:K7+}: Embeddings $\tw\Psi_7$ of $K_7$ in $N_5$
(left) and $\Psi_{11}$ of $K_{11}$ in $S_9$ (right).}
 \end{center}
 \end{figure}

 As part of determining the orientable and nonorientable genera of
$K_{m,n}$, Ringel showed that $K_{m,n}$ has quadrangular embeddings in
certain cases.  Bouchet \cite{Bo78} later provided a simpler proof.

 \begin{lemma}[Ringel \cite{Ri65,Ri65-2}]\label{lem:Kmn}
 The complete bipartite graph $K_{m,n}$ has an orientable quadrangular
embedding whenever $(m-2)(n-2) \equiv 0 \pmod 4$ and $\min\{m,n\} \ge
2$,
 and a nonorientable quadrangular embedding whenever $mn \equiv 0 \pmod
2$ and $\min\{m,n\} \ge 3$.
 \end{lemma}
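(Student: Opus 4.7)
The plan is to establish necessity via Euler's formula, then construct the embeddings explicitly using voltage or current graphs. For necessity: a quadrangulation of $K_{m,n}$ has $m+n$ vertices, $mn$ edges, and hence $mn/2$ faces, so $mn$ must be even. Euler's formula then gives $\chi = (2m + 2n - mn)/2$, which is even (as required for an orientable surface) exactly when $(m-2)(n-2) \equiv 0 \pmod 4$. The boundary cases $\min\{m,n\} = 2$ fall on the sphere in the orientable case, and the condition $\min\{m,n\} \ge 3$ in the nonorientable case rules out $K_{2,n}$, which is planar.

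For sufficiency in the orientable case, I would follow the current-graph approach of Ringel (or the dual reinterpretation of Bouchet). The hypothesis splits into (i) $m, n$ both even, and (ii) exactly one of $m, n$ is $\equiv 2 \pmod 4$ with the other odd. In case (i), set up a small voltage graph whose derived graph is $K_{m,n}$, with voltages in $\mZ_m \oplus \mZ_n$ (or a suitable cyclic group) chosen so that every face boundary walk in the base embedding has total voltage of order exactly $2$ in the group; the derived faces then all lift to quadrilaterals of the required form. Case (ii) is handled by a similar construction with generators chosen to accommodate the odd-order factor, or by taking a two-vertex voltage graph on the sphere with edges labelled by a cyclic group of order $m$ (or $n$).

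For the nonorientable case, when $mn \equiv 0 \pmod 2$ and $\min\{m,n\} \ge 3$, the extra parity combinations (where $(m-2)(n-2) \not\equiv 0 \pmod 4$) are reached by modifying the orientable construction: assign signature $-1$ to a single edge lying on a closed walk of trivial total voltage in the base graph. This converts a handle to a crosscap without altering the local face structure, producing a nonorientable derived surface whose faces remain $4$-cycles. When no orientable construction is available to start from, an analogous voltage-graph construction can be built directly with one orientation-reversing edge from the outset.

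The main obstacle is verifying, in each parity case, that every face boundary walk in the base embedding has total voltage of order exactly $2$, so that pairs of faces in the base merge to form quadrilaterals in the lift; this is particularly delicate in case (ii) where the voltage group must simultaneously accommodate one even and one odd parameter. Bouchet's dual approach, cited in the paper, bypasses much of this by translating quadrangular embeddings of $K_{m,n}$ into purely combinatorial data (essentially pairs of permutations with a prescribed interaction pattern, closely related to Latin squares), which can be constructed uniformly for all admissible parities and which makes the base cases transparent.
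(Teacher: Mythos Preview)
The paper does not prove this lemma at all: it is stated as a cited result of Ringel, with a one-sentence remark that Bouchet later gave a simpler proof. So there is no ``paper's own proof'' to compare against; the authors simply invoke the result as a black box feeding into Lemma~4.2.

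Your proposal is not really a proof but a plan that points back to those same two sources (Ringel's current/voltage graphs and Bouchet's dual reformulation), which is entirely appropriate given the lemma's status here. The necessity discussion is correct and a useful sanity check, though note that the lemma as stated only asserts existence. Your outline of the orientable construction is in the right spirit, but the case split and the promise that ``every face boundary walk has total voltage of order exactly~$2$'' hide all the work; as you yourself flag, verifying this uniformly across the parity cases is the whole content of Ringel's argument, and you have not carried it out. For the nonorientable part, the phrase ``converts a handle to a crosscap'' is imprecise: flipping one edge signature in the base does not change the Euler characteristic of the derived surface, so what you are really doing is replacing $S_g$ by $N_{2g}$, and you must check that the signature change actually produces an orientation-reversing closed walk of trivial total voltage without disturbing any face degrees. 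None of this is wrong, but as written it is a reading guide to Ringel and Bouchet rather than an independent proof; for the purposes of this paper that is exactly what the authors intended, so citing the result (as they do) is the right move.
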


 First we consider nonorientable embeddings.  The following two lemmas
provide the induction steps in our proof.

 \begin{lemma}\label{lem:nonplus4}
 If a complete graph $K_n$, $n \ge 4$, admits a face-simple orientable
or nonorientable quadrangular embedding, then $K_{n+4}$ admits a
face-simple nonorientable quadrangular embedding.
 \end{lemma}

 \begin{proof}
 Take $K_7^+$ as described above, with special vertices $x, y, z$.
 We may interpret $K_7^+$ as a join $(K_1\cup K_5) + \ovl{K_2}$ where
$z$ is the vertex of the $K_1$, and $x$ and $y$ are the vertices of
the $\ovl{K_2}$.
 We build the embedding of $K_{n+4}$ in two steps from
 the embedding $\Phi_1=\tw\Psi_7^+$ of $K_7^+$ as described above,
 an orientable or nonorientable quadrangular embedding $\Phi_2$ of
$K_{6,n-1}$ from Lemma \ref{lem:Kmn},
 and the assumed quadrangular embedding $\Phi_3$ of $K_n$.

 Let $x'$ be a vertex of $K_{6,n-1}$ of degree 6. 
 Since $\delta(\Phi_2) = \min(6,n-1) \ge 3$, $K_{6,n-1}$ is bipartite, and using
Observation \ref{obs:FS}, we satisfy the hypotheses for $\Phi$ in Lemma
\ref{lem:diamsumquad} by taking $\Phi=\Phi_2$ and $v=x'$.
 From above we also satisfy the hypotheses for $\Phi'$ in Lemma
\ref{lem:diamsumquad} by taking $\Phi'=\Phi_1=\tw\Psi_7^+$ and $v'=x$.
 Therefore, by Lemma \ref{lem:diamsumquad}, applying the diamond sum to
$\Phi_1$ and $\Phi_2$ at $x$ and $x'$ yields a face-simple
quadrangulation $\Phi_{12}$, which is nonorientable since $\Phi_2$ is
nonorientable.
 The underlying graph
 $G_{12} = K_7^+\diamsum K_{6,n-1}$ is $(K_1\cup K_5)+\ovl{K_{n-1}}$,
where $z$ is the vertex of the $K_1$, with degree $n-1$, and $y$ is now
a vertex of the $\ovl{K_{n-1}}$.

 Since $\delta(\Phi_{12}) = \min(6,n-1) \ge 3$ and the neighbors of $z$
in $G_{12}$ are independent, we satisfy the hypotheses for $\Phi$ in
Lemma \ref{lem:diamsumquad} by taking $\Phi=\Phi_{12}$ and $v=z$.
 Let $z'$ be a vertex of $K_n$.
 We also satisfy the hypotheses for $\Phi'$ in Lemma
\ref{lem:diamsumquad} by taking $\Phi'=\Phi_3$ and $v'=z'$.
 Therefore, by Lemma \ref{lem:diamsumquad}, applying the diamond sum to
$\Phi_{12}$ and $\Phi_3$ at $z$ and $z'$ yields a face-simple
quadrangulation $\Phi_{123}$, which is nonorientable since $\Phi_{12}$
is nonorientable.
 The underlying graph $((K_1 \cup K_5)+\ovl{K_{n-1}}) \diamsum K_n$ is
$K_5+K_{n-1}=K_{n+4}$.
 \end{proof}

 The same proof also shows the following.  The $C_p$-face in the
embedding $\Phi_3$ of $K_n$ corresponds to a $C_p$-face in the embedding
$\Phi_{123} = \Phi_{12} \diamsum \Phi_3$ of $K_{n+4}$ by the
degree-preserving bijection of Lemma
\ref{lem:diamsumquad}.

 \begin{lemma}\label{lem:nonplus4gen}
 Suppose that $n \ge p \ge 5$.  If a complete graph $K_n$ admits a
face-simple orientable or nonorientable embedding in which all faces are
$C_4$-faces except for one $C_p$-face, then $K_{n+4}$ has a face-simple
nonorientable embedding in which all faces are $C_4$-faces except for
one $C_p$-face.
 \end{lemma}

 \begin{thm}\label{thm:detailnon}
 Given an integer $n$, let $k = 2+ \lceil n(n-5)/4\rceil$.

 Suppose that $n \ge 6$.
 If $n \equiv 0$ or $1 \pmod 4$ then $K_n$ has a face-simple
quadrangular embedding in $N_k$.
 If $n \equiv 2$ or $3 \pmod 4$ then $K_n$ has a face-simple embedding
in $N_k$ in which every face is a $C_4$-face except for one $C_6$-face.

 For $n=4$, $K_4$ has a quadrangular embedding in $N_k=N_1$ that is
closed-2-cell but not face-simple.  For $n=5$, $K_5$ has no quadrangular
embedding in $N_k=N_2$, but has an embedding in $N_3$ with three
$C_4$-faces and one $8$-face.
 \end{thm}

 \begin{proof}
 In each case the genus will follow by simple face/edge counting and
Euler's formula, so we focus on the other properties.
 For $n \not\equiv 1 \pmod 4$, \shortlong{the appendix}{Appendix
\ref{sec:small}} gives the required embeddings $\nk_n$ for $n \in
\{4,6,7,8\}$, and we then repeatedly apply Lemma \ref{lem:nonplus4} or
\ref{lem:nonplus4gen}.  (We need $n=8$ because the embedding for $n=4$
is not face-simple.)

 Suppose that $n \equiv 1 \pmod 4$.  For $n=5$, Hutchinson \cite[proof
of Theorem 2]{H75} and Hartsfield and Ringel \cite[Theorem 2]{HRnon}
showed that there is no quadrangular embedding of $K_5$ in the Klein
bottle $N_2$.
  However, there is a face-simple quadrangular embedding $\tw\Psi_6^-$
of $K_6-e$ in $N_3$ given in \shortlong{the appendix}{Appendix
\ref{sec:small}}, and deleting vertex $0$ gives the required embedding
of $K_5$ in $N_3$.
 For $n \ge 9$, applying Lemma \ref{lem:nonplus4} to the orientable
quadrangular embedding $\ok_5$ of $K_5$ from \shortlong{the
appendix}{Appendix \ref{sec:small}} gives a nonorientable quadrangular
embedding of $K_9$, and we then repeatedly apply Lemma
\ref{lem:nonplus4}.
 \end{proof}

 Theorem \ref{thm:genusnon} follows because every embedding
given in Theorem \ref{thm:detailnon} is even-faced.

 By adding chords (carefully, for $K_5$) or a single vertex inside the
face of degree greater than $4$ we also obtain the following\shortonly{
(see \cite{LLCEHYYZp} for details for $K_5$)}.

 \begin{corollary}\label{cor:quadnon}
 Suppose that $n \ge 4$ and $k = \lceil n(n-5)/4\rceil+2$.
 If $n \equiv 0$ or $1 \pmod 4$ and $n \ne 5$, then $K_n$ has a
quadrangular embedding in $N_k$, which is face-simple if $n \ge 8$.
 For $n=5$, $K_5$ is a subgraph of a quadrangular $5$-vertex embedding
with multiple edges in $N_3$, and of a face-simple simple $6$-vertex
quadrangular embedding in $N_3$.
 If $n \equiv 2$ or $3 \pmod 4$, $K_n$ is a subgraph of a quadrangular
$n$-vertex embedding with multiple edges in $N_k$, and of a face-simple
simple $(n+1)$-vertex quadrangular embedding in $N_k$.
 \end{corollary}

 \longonly{%
 \begin{proof}
 For $n=5$ we take $\tw\Psi_6^-$ from \shortlong{the appendix}{Appendix
\ref{sec:small}} as the simple $6$-vertex embedding.  Deleting vertex
$0$ from $\tw\Psi_6^-$ leaves an $8$-face $(1_1 4_1 5_1 2 1_2 3 4_2
5_2)$ (subscripting occurrences of the same vertex to distinguish them).
 We can add multiple edges $1_1 3$ and $5_1 3$.
 \end{proof}
 }

 Now we turn to orientable embeddings.
 The following two lemmas provide the induction steps in our proof. 
They are proved in exactly the same way as Lemmas \ref{lem:nonplus4} and
\ref{lem:nonplus4gen}, except that we take $\Phi_1$ to be the orientable
quadrangular embedding $\Psi_{11}^+$ of $K_{11}^+$ instead of the
nonorientable embedding $\tw\Psi_7^+$ of $K_7^+$, and $\Phi_2$ to be an
orientable quadrangular embedding of $K_{10,n-1}$, instead of a
nonorientable embedding of $K_{6,n-1}$.

 \begin{lemma}\label{lem:oriplus4}
 If a complete graph $K_n$, $n \ge 4$, admits a face-simple orientable
quadrangular embedding, then $K_{n+8}$ admits a face-simple orientable
quadrangular embedding.
 \end{lemma}

 \begin{lemma}\label{lem:oriplus4gen}
 Suppose that $5 \le p \le n$.  If a complete graph $K_n$ admits a
face-simple orientable embedding in which all faces are $C_4$-faces
except for one $C_p$-face, then $K_{n+8}$ has a face-simple orientable
embedding in which all faces are $C_4$-faces except for one $C_p$-face.
 \end{lemma}

 There is one case where we cannot find an embedding with all of the
properties we would like.

 \begin{prop}\label{prop:K6}
 Every general embedding of $K_6$ in $S_2$ is cellular with five
$C_4$-faces and two $C_5$-faces, and such an embedding exists.
 Thus, $K_6$ has no general even-faced embedding in $S_2$.
 \end{prop}

 \begin{proof}[Outline of proof]
 Let $\Phi$ be a general embedding of $K_6$ in $S_2$, with $n=6$ vertices,
$m=15$ edges, $r$ faces, and $r_i$ faces of degree $i$.
 By Euler's inequality, $r \ge \ec - n + m = -2-6+15 = 7$.
 By Observation \ref{obs:F2}, $\delta^*(\Phi) \ge 4$, and so
 $30=2m=4r_{4} + 5r_{5} + 6r_{6} + \ldots \ge 4r$.  Hence $r = 7$, with
either $r_{4}=6$ and $r_{6}=1$, or $r_4=5$ and $r_5=2$.  Since $r=7$,
$\Phi$ is cellular.  By Observation \ref{obs:F} the $4$-faces and any
$5$-faces are bounded by cycles; any $6$-face is bounded by a cycle or a
`bowtie' walk $(abcade)$.


 Now that we have restricted the structure of $\Phi$, we can perform a
case analysis to show that the embedding is as described.
 \shortlong{The details may be found in \cite{LLCEHYYZp}.  }{Details may
be found in Appendix \ref{sec:k6}.  }%
 It is also easy to generate and check all rotation systems for $K_6$
(up to isomorphism) by computer.  We did this; it ran in less than a
minute.
 The embedding $\ok_6$ from \shortlong{the appendix}{Appendix
\ref{sec:small}} demonstrates existence.
 \end{proof}

 \begin{thm}\label{thm:detailori}
 Given an integer $n$, let $h = 1+\lceil n(n-5)/8\rceil$.

 Suppose that $n=5$ or $n \ge 7$.
 If $n \equiv 0$ or $5 \pmod 8$, then $K_n$ has a face-simple
quadrangular embedding in $S_h$.
 If $n \not\equiv 0$ and $5 \pmod 8$ then $K_n$ has a face-simple
embedding in which every face is a $C_4$-face except for one $C_p$-face,
where $p \in \{6,8,10\}$ (specifically, $p = 12 - (n(n-5) \bmod 8)$).

 For $n=4$, $K_4$ has an embedding in $S_h=S_1$ with one $C_4$-face and
one $8$-face.
 For $n=6$, $K_6$ has no even-faced embedding in $S_h=S_2$, but has an
embedding in $S_2$ with five $C_4$-faces and two $C_5$-faces, and an
embedding in $S_3$ with four $C_4$-faces and one $14$-face.
 \end{thm}

 \begin{proof}
 In each case the genus will follow by simple face/edge counting and
Euler's formula, so we focus on the other properties.
 For $n \ne 4$ and $6$, \shortlong{the appendix}{Appendix
\ref{sec:small}} gives the required embeddings $\ok_n$ of $K_n$ for $n
\in \{5, 7,\ab 8,\ab 9,\ab 10,\ab 11,\ab 12, 14\}$, covering all classes
modulo $8$, and we then repeatedly apply Lemma \ref{lem:oriplus4} or
\ref{lem:oriplus4gen}.
 For $n=4$, there cannot be an embedding with a $C_8$-face, but an
embedding $\ok_4$ with an $8$-face is given in \shortlong{the
appendix}{Appendix \ref{sec:small}}.

 For $n=6$, see Proposition \ref{prop:K6}.
 To obtain the embedding of $K_6$ in $S_3$ with a $14$-face, take
$\ok_6$ from \shortlong{the appendix}{Appendix \ref{sec:small}} and swap
the positions of $1$ and $2$ in the rotation of vertex $0$.
 This replaces face boundaries $(01234)$, $(03142)$
and $(0251)$ by a single closed walk $(01234025103142)$ of length $14$.
 \end{proof}

 Theorem \ref{thm:genusori} follows because every embedding given in
Theorem \ref{thm:detailori}, except for the embedding of $K_6$ in $S_2$,
is even-faced.

 By adding chords (carefully, for $K_4$ and $K_6$) or a single vertex
(or two vertices, for $K_6$) inside the face of degree greater than $4$
we also obtain the following\shortonly{ (see \cite{LLCEHYYZp} for
details for $K_4$ and $K_6$)}.

 \begin{corollary}\label{cor:quadori}
 Suppose that $n \ge 4$ and $h = \lceil n(n-5)/8\rceil+1$.
 If $n \equiv 0$ or $5 \pmod 8$, then $K_n$ has a
quadrangular embedding in $S_h$.
 If $n \not\equiv 0$ and $5 \pmod 8$ and $n \ne 6$, $K_n$ is a subgraph
of a quadrangular $n$-vertex embedding with multiple edges in
$S_h$, and of a simple $(n+1)$-vertex quadrangular embedding in $S_h$.
 For $n=6$, $K_6$ is a subgraph of a quadrangular $6$-vertex embedding
with multiple edges in $S_3$ and a simple $8$-vertex quadrangular
embedding in $S_3$.
 \end{corollary}

 \longonly{%
 \begin{proof}
 For $K_4$, the $8$-face in $\ok_4$ is $(0_1 1_1 2_1 3_1 1_2 0_2 3_2
2_2)$ (subscripting occurrences of the same vertex to distinguish them)
and we can add multiple edges $0_1 3_1, 1_2 2_2$.  Adding a new vertex
$4$ adjacent to $0_1, 2_1, 1_2, 3_2$ gives an embedding isomorphic to
the the quadrangular embedding $\ok_5$ of $K_5$ in $S_1$.

 For $K_6$, the $14$-face from the proof of Theorem \ref{thm:detailori} is
 $(0_1 1_1 2_1 3_1 4_1 0_2 2_2 5 1_2 0_3 3_2 1_3 4_2 2_3)$ and we can
add multiple edges
 $0_1 3_1,\ab 3_1 2_2,\ab 3_1 4_2,\ab 2_2 0_3,\ab 0_3 4_2$
 or new vertices $6$ adjacent to $2_3, 1_1, 3_1, 0_2, 5$ and then $7$
adjacent to $1_2, 3_2, 4_2, 6$.  (Or the $8$-vertex simple
quadrangulation of $S_3$ containing $K_7$ also contains $K_6$.)
 \end{proof}
 }

 \section{Proof of the Even Map Color Theorem}
 \label{sec:color}

 The Map Color Theorem says that for a graph embedding $\Phi$ in a
surface $\Si \ne S_0$, $\chi(\Phi)
\le H(\Si) = 
 \left\lfloor \left(7 + \sqrt{49-24\ec(\Si)}\right)/2 \right\rfloor$
(the {\em Heawood number\/} of $\Si$), which can be improved to
$\chi(\Phi) \le H(\Si)-1$ if $\Si=N_2$, and these bounds are sharp.
 Hutchinson showed that this can be significantly improved if the
embedding is even-faced.
 In this section we strengthen her bound in one case, and use the
embeddings constructed in Section \ref{sec:embdiamsum} to show that the
bounds are sharp.

 \begin{thm}[{Hutchinson \cite[Theorems 1 and 2 and Corollary 2]%
	{H75}}]\label{thm:H75}
 For a surface $\Si$ define $H\even(\Si) =
 \left\lfloor \left(5 + \sqrt{25-16\ec(\Si)}\right)/2 \right\rfloor$. 
If $\Phi$ is an even-faced graph embedding in a surface $\Si \ne S_0$
then $\chi(\Phi) \le H\even(\Si)$.
 If $\Si = N_2$ this can be improved to $\chi(\Phi) \le
H\even(N_2)-1=4$.
 These results are sharp when $\Si = N_1$, $N_2$ or $S_1$.
 \end{thm}

 Hutchinson's proof requires $\Phi$ to be cellular,
because she first proves a face-coloring result and applies that to the
dual $\Phi^*$; each vertex of $\Phi$ needs to correspond to a distinct
face of $\Phi^*$.
 One way to extend the result to general embeddings is by first applying
the following lemma.

 \begin{lemma}\label{lem:C}
 Suppose $\Phi$ is a general embedding.  Then we can construct a new
embedding $\Phi'$ in the same surface by adding edges, such that (a)
$\Phi'$ is cellular, (b)
each face of $\Phi$ corresponds to a distinct face of $\Phi'$, (c) if two
faces are adjacent in $\Phi$ then the corresponding faces are adjacent
in $\Phi'$, (d) if $\Phi$ is even-faced then so is $\Phi'$, and
(e) if $\Phi$ is even-vertexed then so is $\Phi'$.
 \end{lemma}

 \begin{proof}
 We can add edges inside faces to destroy each internal handle or
crosscap (run an edge along the handle or across the crosscap)
and connect different boundary components without creating any new
faces, satisfying (a)--(d).
 If we replace each new edge by two parallel edges bounding a $2$-face we
still satisfy (a)--(d) and also satisfy (e).  In particular, since new
edges have the same face on both sides, this does not violate (c).
 \end{proof}

 We can also prove Theorem \ref{thm:H75} directly for general
embeddings.  We outline a proof of the general inequality in Theorem
\ref{thm:H75}, based on translating and simplifying the proof of
\cite[Theorem 1]{H75}, as we need some details later.  We use the
following preliminary results, which are implicit in the arguments of
\cite{H75}\shortonly{; proofs of Lemmas \ref{lem:R} and \ref{lem:sqrt}
may be found in \cite{LLCEHYYZp}}.

 \begin{obs}\label{obs:X}
 If we remove edges or vertices from an even-faced embedding it
remains even-faced.
 \end{obs}

 \begin{lemma}\label{lem:R}
 If $\Psi$ is a general $n$-vertex embedding in a surface $\Si$ with
average
(or minimum) vertex degree at least $d$ and average (or minimum) face
degree at least $4$, then $n(d-4) \le -4\ec(\Si)$.
 \end{lemma}

 \longonly{%
 \begin{proof}
 Suppose $\Psi$ has $m$ edges and $r$ faces, and let $\ec = \ec(\Si)$.
 By Euler's inequality $n-m+r \ge \ec$, and the degree
conditions mean that $2m \ge nd$ and $2m \ge 4r$.  Thus,
 $$4 \ec \le 4n-4m+4r = 4n -2m + (4r-2m) \le 4n-nd+0 = n(4-d)$$
 and the result follows.
 \end{proof}
 }

 \begin{lemma}\label{lem:sqrt}
 Let $\Si$ be a surface with $\Si \ne S_0$, and let $d = H\even(\Si)$. 
Then $d \ge 4$ and $d$ is the smallest positive integer such that
$(d+1)(d-4) > -4\ec(\Si)$.
 \end{lemma}

 \longonly{%
 \begin{proof}
 Consider $p(x) = (x+1)(x-4)+4\ec(\Si)=x^2-3x-4+4\ec(\Si)$.  Since
$\ec(\Si) \le 1$, $p(1)=p(2)<0$, and so $p(x)$ has two real roots $\al_1
< 1$ and $\al_2 > 2$.
 By the quadratic formula, $\al_2 = (3+\sqrt{25-16\ec(\Si)})/2$.
 A positive integer $d$ has $p(d) > 0$ if and only if $d > \al_2$.
 But since $d$ is an integer, $d > \al_2$ is
equivalent to $d \ge \lfloor \al_2+1 \rfloor = H\even(\Si)$.
 So the smallest positive integer $d$ with $(d+1)(d-4) > -4\ec(\Si)$, or
$p(d) > 0$, or $d > \al_2$, is exactly $H\even(\Si)$.

  Since $\ec(\Si) \le 1$, if $d=H\even(\Si)$ then the formula for
$H\even(\Si)$ yields $d \ge 4$.
 \end{proof}
 }

 \begin{proof}[Outline of proof that $\chi(\Phi) \le H\even(\Si)$ for
general embeddings]
 We label the steps here for reference.  Let $\ec=\ec(\Si) \le 1$.
 (A)~Let $d = H\even(\Si) \ge 4$.
 (B) Let $\Phi$ be an even-faced embedding of minimum order $n$ in
$\Si$ whose graph $G$ is not $d$-colorable.
 (C)~If $n \le d$, then $G$ is $d$-colorable, so $n \ge d+1$.
 (D)~Remove all loops from $\Phi$ to obtain an even-faced
embedding $\Phi_1$ of $G_1$ with $\chi(G_1) = \chi(G)$.
 (E)~Starting with $\Phi_1$ repeatedly remove one edge from each
$2$-face until no $2$-faces remain, giving an even-faced embedding
$\Phi_2$ of a graph $G_2$ with $\chi(G_2) = \chi(G)$ and
$\delta^*(\Phi_2) \ge 4$.
 (F)~If $\delta(\Phi_2) \le d-1$, then we can remove a vertex $x$ of
degree at most $d-1$, $d$-color $G_2-x$ by minimality since $\Phi_2-x$
remains even-faced, then color $x$, so $G_2$, and hence $G$, is
$d$-colorable.  Thus, $\delta(\Phi_2) \ge d$.
 (G) By (E), (F), Lemma \ref{lem:R} and (C),
 $(d+1)(d-4) \le n(d-4) \le -4\ec$, contradicting Lemma \ref{lem:sqrt}.
 \end{proof}

 Working with general embeddings, rather than cellular embeddings,
simplifies the above proof in step (F): we do not have to worry about
losing cellularity when we delete a vertex.

 We now show that the bound of Theorem \ref{thm:H75} can be improved
by $1$ when the surface is $S_2$, and then show that with this
improvement the result is sharp.

 \begin{prop}\label{prop:s2col}
 Suppose $G$ is a graph (multiple edges and loops allowed) with a
general even-faced embedding $\Phi$ in $S_2$.
 Then (ignoring loops when coloring) $\chi(G) \le H\even(S_2)-1 = 5$.
 \end{prop}

 \begin{proof}
 We modify the above proof that $\chi(\Phi) \le H\even(\Si)$.
 In (A) take $d=5$ instead of $d=H\even(S_2)=6$.
 As in (B), let $\Phi$ be an even-faced embedding of minimum order
$n$ in $\Si=S_2$ ($\ec=-2$) of a graph $G$ with $\chi(G) > d=5$.
 Apply Lemma \ref{lem:C} to convert $\Phi$ into an even-faced cellular
embedding $\Phi_0$ of a necessarily connected graph $G_0$.
 Apply (D) and (E) to $\Phi_0$ to construct $\Phi_1$ and $\Phi_2$ as
above.  Then $G_2$ is loopless and connected, $\chi(G_2) = \chi(G_1) =
\chi(G_0) \ge \chi(G)$, and $\delta^*(\Phi_2) \ge 4$.
 From (C) and (F), $n \ge d+1=6$ and $\delta(\Phi_2) \ge d = 5$.

 Delete edges from $\Phi_2$ to obtain an even-faced
embedding $\Phi_3$ in $S_2$ of an underlying connected simple graph
$G_3$ of $G_2$.  We have $\chi(G_3) = \chi(G_2) \ge \chi(G) > 5$.
 By the argument of (F), we still have $\delta(\Phi_3) \ge 5$.
 Let $m$ and $r$ be the number of edges and faces of $\Phi_3$,
respectively.

 Suppose that $n=6$.  If $G_3 \ne K_6$, then $G_3$, and hence $G$, is
$5$-colorable, so $G_3 = K_6$.  But then, by Proposition \ref{prop:K6},
$\Phi_3$ cannot exist, a contradiction.

 So $n \ge 7$.
 If $\Delta(G_3) = 5$, then since $G_3$ is
simple, connected and not equal to $K_6$, by Brooks' Theorem, $G_3$, and
hence $G$, is $5$-colorable. 
 Thus, $G_3$ has a vertex of degree $6$ or more, from which $2m \ge
5(n-1)+6 = 5n+1$, so $m \ge \lceil (5n+1)/2 \rceil$.
 By Observation \ref{obs:F2}, $\delta^*(\Phi_3) \ge 4$, so
$2m \ge 4r$ and $r \le m/2$.  Therefore, $-2 = \ec \le n-m+r
\le n-m+m/2 = n-m/2 \le n - \lceil (5n+1)/2 \rceil/2$.  This fails
if $n \ge 8$, so $n = 7$.  Moreover, when $n=7$ this is tight, so all
steps in our reasoning are tight.  In particular, $G_3$ has one vertex
of degree $6$ and $n-1=6$ vertices of degree $5$.  But then $G_3 = K_7 -
3K_2$ (delete three independent edges from $K_7$), which is
$4$-colorable, a contradiction.

 So no such $\Phi$ exists, and $\chi(G) \le 5$ for all $G$ with an even-faced
embedding in $S_2$.
 \end{proof}

 \begin{figure}[tbp]\refstepcounter{figure}\label{fig:addhc}
 \begin{center}

  \includegraphics[scale=0.9]{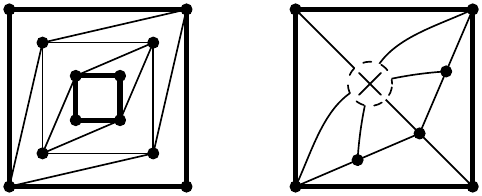}

  {Figure~\ref{fig:addhc}: Adding a quadrangular handle (left) or
crosscap (right).}

 \end{center}
 \end{figure}

 Before proving the main result of this section, we introduce two
operations.  By {\em adding a quadrangular handle} to an orientable
quadrangulation $\Phi$, we mean deleting two distinct faces, and
inserting a handle (cylinder or annulus) with four new vertices as shown
at left in Figure \ref{fig:addhc}, identifying the inner and outer
$4$-cycles with the boundaries of the deleted faces, so that the
resulting quadrangular embedding $\Phi'$ is still orientable.  Note that
if $\Phi$ is simple, so is $\Phi'$, and if $\Phi$ is face-simple, so is
$\Phi'$.

 Also, by {\em adding a quadrangular crosscap} to a quadrangular
embedding $\Phi$ we mean inserting a crosscap and three new vertices in
a face, as shown at right in Figure \ref{fig:addhc}.  The resulting
quadrangular embedding $\Phi'$ is nonorientable.  If $\Phi$ is simple,
so is $\Phi'$, and if $\Phi$ is face-simple, so is $\Phi'$.

 In both cases, $\chi(\Phi') \ge \chi(\Phi)$.

 \begin{proof}[Proof of Theorem \ref{thm:evenmct}, the Even Map Color
Theorem]
 Consider first the vertex-coloring result (a).  The upper bound on
$\chi(\Phi)$ follows from Hutchinson's Theorem \ref{thm:H75}, our
Proposition \ref{prop:s2col}, and the well-known fact that graphs with
even-faced embeddings in the plane are bipartite.
 Hutchinson provided sharpness examples for $\Si = N_1, N_2$ and $S_1$,
but we now provide sharp quadrangular examples for all surfaces.

 First we apply the results of Section \ref{sec:embdiamsum}.
 For $n \ge 7$, it follows from Corollaries \ref{cor:quadnon} and
\ref{cor:quadori} and addition of quadrangular handles or quadrangular
crosscaps that there is a face-simple quadrangular embedding
$\Omega_{n,\Si}$ in $\Si$ of a simple graph with $K_n$ as a subgraph,
and hence with $\chi(\Omega_{n, \Si}) \ge n$, provided $\ec(\Si) \le
n(5-n)/4$, i.e., $n(n-5) \le -4\ec(\Si)$.  This also works for $n=5$ if
$\Si$ is orientable and for $n=6$ if $\Si$ is nonorientable.  

 Now suppose that $\ec(\Si) \le -4$, and let $d=c(\Si) = H\even(\Si) \ge
7$.  
 By Lemma \ref{lem:sqrt}, $d$ is the smallest positive integer with
$(d+1)(d-4) > -4\ec(\Si)$, so this inequality fails with $d$ replaced by
$d-1$, giving $d(d-5) \le -4\ec(\Si)$.  Hence, by the previous
paragraph, there is an embedding $\Omega_{d,\Si}$ with $d \le
\chi(\Omega_{d,\Si}) \le H\even(\Si)=d$, which provides the required
sharpness example.

 The remaining surfaces are $S_h$ for $0 \le h \le 2$ and $N_k$ for $1
\le k \le 5$.
 For $S_0$ take the standard planar (spherical) embedding of the cube.
 If $\Si = S_1$ or $S_2$ take $\Omega_{5,\Si}$, which has
$5 \le \chi(\Omega_{5,\Si}) \le c(\Si) = 5$.
 For $N_1$ and $N_2$, take $\nk_4$ in $N_1$ (from \shortlong{the
appendix}{Appendix \ref{sec:small}}), add four new vertices inside each
face as shown at left in Figure \ref{fig:addpl} to obtain face-simple
$\nk_4'$ in $N_1$, and then add a quadrangular crosscap to give
$\nk_4''$ in $N_2$.
 Then $4=\chi(\nk_4) \le \chi(\nk_4') \le \chi(\nk_4'') \le
c(N_1)=c(N_2)=4$, so take $\nk_4'$ and $\nk_4''$ for $N_1$ and $N_2$,
respectively.
 For $N_3$ add a quadrangular crosscap to $\ok_5$ (from \shortlong{the
appendix}{Appendix \ref{sec:small}}).
 If $\Si=N_4$ or $N_5$ use $\Omega_{6,\Si}$.

 \begin{figure}[tbp]\refstepcounter{figure}\label{fig:addpl}
 \begin{center}

  \includegraphics[scale=0.9]{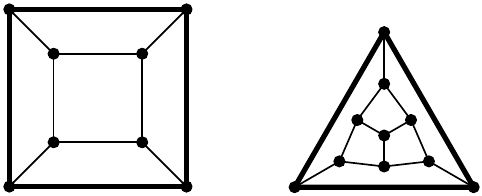}

  {Figure~\ref{fig:addpl}: Adding new faces in the same surface.}

 \end{center}
 \end{figure}

 The face-coloring version (b) follows from the vertex coloring-version
(a) by taking duals, after applying Lemma \ref{lem:C} if an embedding is
noncellular.
 All of the vertex-coloring sharpness examples are quadrangular
(implying closed-$2$-cell), face-simple and simple, so their duals
provide face-coloring sharpness examples that are $4$-regular,
closed-$2$-cell, simple and face-simple.
 \end{proof}

 We cannot extend the Even Map Color Theorem to embeddings with all face
degrees at least $4$, because there is no counterpart to Observation
\ref{obs:X} for such embeddings.
 They may realize the Heawood bound of the original Map Color Theorem.
 Suppose $\Phi$ is a sharpness example (such as a triangular embedding
of some $K_n$) for the original Map Color Theorem in a surface $\Si \ne
S_0$, $N_2$, so that $\chi(\Phi) = H(\Si)$.
 If we add new vertices inside each triangular face as shown at right in
Figure \ref{fig:addpl}, we obtain an embedding $\Phi'$ in $\Si$ with
$\delta^*(\Phi') \ge 4$ and $\chi(\Phi') = H(\Si) > H\even(\Si)$.
 \section{Embeddings from graphical surfaces and voltage graphs}
 \label{sec:embgsvg}

 In this section, we use graphical surfaces and voltage graphs to
construct both orientable and nonorientable quadrangular embeddings of
certain graphs of the form $G[K_4]$, proving Theorem \ref{thm:compk4}.

 \begin{thm}\label{thm:compk4ori}
 Let  $G$ be a connected simple graph with a perfect matching. Then
$G[K_4]$ has a face-simple orientable quadrangular embedding.
 \end{thm}

 \begin{proof} Let $G$ have perfect matching $M$, and let $S(G)$ be the
graphical surface derived from $G$.

 First, construct a quadrangular embedding $\eH$ of $H=\double{G}$
in $S(G)$ as in Lemma~\ref{lem:compk2ori}.
 For each vertex $v \in V(G)$ there are two vertices $v_\cN, v_\cS \in
V(H)$.
 For each $uv \in E(G)$, there is a tube $T_{uv}$ in $S(G)$, along which
run the edges  $u_\cN v_\cN, u_\cN v_\cS, u_\cS v_\cS$ and $u_\cS v_\cN$
of $H$.
 Each edge $u_\vP v_\vQ$ of $H$ belongs to two quadrilaterals of the
form $(u_\cN v_\vQ u_\cS t_\vX)$ and $(u_\vP v_\cN w_\vY v_\cS)$ where
$tu, vw \in E(G)$ and
$\vP, \vQ, \vX, \vY \in \{\cN, \cS\}$.

 Modify the embedding $\eH$ by splitting each edge into a digon
($2$-cycle) bounding a face.  Let $\eJ$ be the new embedding, with
underlying graph $J$.
 The other faces of $\eJ$ are quadrilaterals, in
one-to-one correspondence with the quadrilaterals of $\eH$.
 We now assign voltages from the group $\mZ_2$; since all elements of
$\mZ_2$ are self-inverse, the designation of plus directions for edges
does not matter.
 Choose a voltage assignment $\alpha: E(J)\to \mZ_2$ so that the
voltages of the edges of $J$ around each tube $T_{uv}$ alternate between
$0$ and $1$.
 Then each digon of $\eJ$ has one edge of voltage $0$ and one edge of voltage
$1$.  Each quadrilateral of $\eJ$, which uses edges from two tubes
$T_{uv}$ and $T_{vw}$, has an edge of voltage $0$ and an edge of voltage
$1$ on $T_{uv}$, and similarly for $T_{vw}$.  Therefore, every digon has
total voltage $1$ and every quadrilateral has total voltage $0$ in
$\langle J, \alpha\rangle$.
 Thus, the derived embedding $\eJ^\al$, with underlying graph $J^\al =
\double{H} = \double{\double{G}} = G[\,\ovl{K_4}\,]$, is an orientable
quadrangulation.

 We could also have obtained a quadrangular embedding of
$G[\,\ovl{K_4}\,] = \double{\double{G}}$ directly from Lemma
\ref{lem:compk2ori}, but that would not have had the special structure which
we now exploit to obtain an embedding of $G[K_4]$.
 We work with the vertices of $G$ in pairs specified by the perfect
matching $M$.


 \begin{figure}[!hbtp] \refstepcounter{figure}\label{fig:gs}
 \begin{center}
 \includegraphics[scale=1]{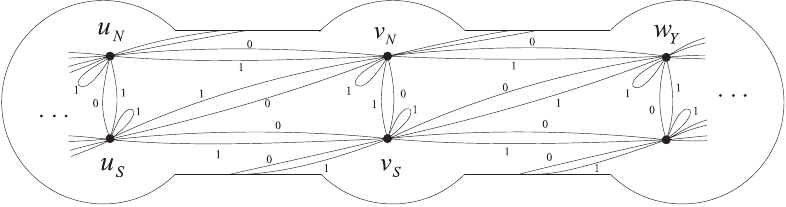}\\
 {Figure~\ref{fig:gs}: Voltage graph $\langle J_1, \alpha_1 \rangle$
generated from graphical surface of $\double{G}$.}
 \end{center}
 \end{figure}

 For each $uv \in M$, let $e$ be one of the four edges of $H$ on
the tube $T_{uv}$, between $\{u_\cN, u_\cS \}$ and $\{v_\cN, v_\cS \}$.
 We choose $e = u_\cS v_\cN$, as this makes it easier to illustrate what
is happening (see Figure~\ref{fig:gs}).
 In $\eH$, $e=u_\cS v_\cN$ belongs to two quadrilaterals $Q_u=(t_\vX
u_\cN v_\cN u_\cS)$ and $Q_v=(u_\cS v_\cN w_\vY v_\cS)$ where $tu, vw
\in E(G)$ and $\vX, \vY \in \{\cN, \cS\}$.
 Let the two edges of the digon in $J$ corresponding to $e$ be $e_1$ and
$e_2$, where $e_1$ belongs to the quadrilateral $Q'_u$ of $\eJ$
corresponding to $Q_u$ and $e_2$ belongs to the quadrilateral $Q'_v$ of
$\eJ$ corresponding to $Q_v$.
 Add a digon of two edges $d_1$ and $d_2$ in $Q_u'$ between $u_\cN$ and
$u_\cS$, and a digon of two edges $d_3$ and $d_4$ in $Q_v'$ between
$v_\cN$ and $v_\cS$, so we have four triangles $T_1(u)=(t_\vX u_\cN
u_\cS)$ using $d_1$, $T_2(u)=(u_\cN v_\cN u_\cS)$ using $d_2$,
$T_1(v)=(u_\cS v_\cN v_\cS)$ using $d_3$ and $T_2(v)=(v_\cS  v_\cN 
w_\vY)$ using $d_4$. Assign voltage 1 to $d_2$ and $d_3$, and 0 to $d_1$
and $d_4$. Insert a loop with voltage 1 at each of $u_\cN, u_\cS, v_\cN$
and $v_\cS$ and put these loops in the four different triangles $T_1(u),
T_2(u), T_1(v)$ and $T_2(v)$, respectively.

 Everything up to this point could have been done using independently
chosen quadrilaterals $Q'_u$ containing $u_\cN$ and $u_\cS$ and $Q'_v$
containing $v_\cN$ and $v_\cS$.
 However, the total voltages for the $4$-faces containing the
loops at $u_\cS$ and $v_\cN$ are currently $1$, so they will not
generate quadrilaterals in the derived embedding.
 To fix this, swap the voltages on $e_1$ and $e_2$: this is where we use
the pairing of vertices via $M$.
 Let $\eJ_1$, $J_1$ and $\al_1$ be the final embedding, graph and
voltage assignment, as shown in Figure \ref{fig:gs}.

 In $\eJ_1$ there are four types of faces.  Each $2$-face has total
voltage $1$ in $\langle J_1, \al_1 \rangle$, and each quadrilateral and
$4$-face containing a loop has total voltage $0$.  These three types of
faces all lift to quadrilaterals in $\eJ_1^{\al_1}$.
 The final type of face is bounded by a loop of total voltage $1$.
 This lifts to a face in $\eJ_1^{\al_1}$ bounded by a digon between
$(u_\vX,0)$ and $(u_\vX,1)$, where $u \in V(G)$ and $\vX \in \{\cN,
\cS\}$.
 Replacing each such digon in
$\eJ_1^{\al_1}$ by a single edge generates the required orientable
quadrangular embedding of $G[K_4]$, which is automatically face-simple
by Observation \ref{obs:FS}.
 \end{proof}

 As a special case of Theorem \ref{thm:compk4ori}, by taking $G=K_{2k}$ for
$k \ge 1$ we obtain a proof of Theorem \ref{thm:genusori} if the case
where $n \equiv 0 \pmod 8$.
 We can also obtain a nonorientable version of Theorem \ref{thm:compk4ori},
 which provides a proof of Theorem \ref{thm:genusnon} in the case where
$n \ge 16$ and $n \equiv 0 \pmod 8$.

 \begin{thm}\label{thm:compk4non}
 Let $G$ be a connected simple graph with a perfect matching and a
cycle. Then $G[K_4]$ has a face-simple nonorientable quadrangular
embedding.
 \end{thm}

 \begin{proof}
 Use Lemma \ref{lem:compk2non} instead of Lemma \ref{lem:compk2ori}
in the proof of Theorem \ref{thm:compk4ori}.  Replacing one or more tubes
by twisted tubes does not affect the argument.
 Take the orientation-reversing cycle $C=(u_\cN v_\cN w_\cN \ldots
z_\cN)$ in $H=\double{G}$ from the proof of Lemma \ref{lem:compk2non}
and replace each edge of $C$ by the edge of voltage $0$ in the
corresponding digon of $J_1$.  This gives an orientation-reversing cycle
of total voltage $0$ in $\langle J_1, \al_1 \rangle$, so the final
embedding is nonorientable.

 In the nonorientable case we also need to verify that the embedding is
face-simple.  We can properly $2$-face-color $\eJ_1$, coloring the
$4$-faces white and the other faces black; this lifts to a proper
$2$-face-coloring of $\eJ_1^{\al_1}$.
 In $\eJ_1$ each white face shares an edge with four distinct black
faces, so this also holds in $\eJ_1^{\al_1}$, and thus
$\eJ_1^{\al_1}$ is face-simple.
 Replacing digons makes each white face share at most one edge with
another white face, and the final embedding is still face-simple.
 \end{proof}

 Theorems \ref{thm:compk4ori} and \ref{thm:compk4non} together prove
Theorem \ref{thm:compk4}.
 \section{Minimal quadrangulations}
 \label{sec:minquad}

 In this section we apply our results to determine the order of some
minimal quadrangulations.

 Hartsfield and Ringel \cite{HRori,HRnon} showed that an $n$-vertex simple
quadrangulation of $\Si$ must satisfy $n(n-5) \ge -4\ec(\Si)$.
 They used this to investigate minimal quadrangulations of surfaces of
small genus, and to show that quadrangular embeddings of complete graphs
and generalized octahedra $O_{2k}=K_k[\overline{K_2}]$, $k \ge 4$ are
minimal.
 Lawrencenko \cite{L13} showed that certain orientable quadrangular
embeddings of a graph $\double{G}$, as described in Subsection
\ref{ss:graphical}, are minimal.
 The following lemma implies the minimality results
 of \cite{HRori, HRnon, L13}.

 \begin{lemma}\label{lem:minquad}
 Suppose that $L$ is obtained by deleting at most $n-4$ edges from the
complete graph $K_n$, $n \ge 5$.  Then any quadrangular embedding of $L$
is minimal.
 \end{lemma}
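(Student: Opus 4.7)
The plan is to apply Euler's formula twice and finish with a short quadratic inequality, deriving a contradiction from the hypothetical existence of a smaller simple quadrangulation. First I would write $m=|E(L)|=\binom{n}{2}-k$ with $0\le k\le n-4$, so that Observation \ref{obs:quadedges}(i) gives $\chi(\Sigma)=n-m/2$ for the surface $\Sigma$ carrying the embedding (note that $L$ is connected, hence $\Sigma$ is connected, because removing $k\le n-4<n-1$ edges cannot disconnect $K_n$).

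Next I would suppose for contradiction that some simple graph $L'$ on $n'<n$ vertices also admits a quadrangular embedding on $\Sigma$. By the same observation $L'$ must have $m'=2n'-2\chi(\Sigma)$ edges, and simplicity forces $m'\le \binom{n'}{2}$. Substituting the value of $\chi(\Sigma)$ and using the identity $n(n-1)-n'(n'-1)=(n-n')(n+n'-1)$, a routine rearrangement reduces the whole situation to
\[ (n-n')(n+n'-5)\le 2k\le 2(n-4). \]

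The final step is to show this is impossible for any admissible $n'$. Setting $d=n-n'\in\{1,2,\dots,n-1\}$ turns the inequality into $d(2n-d-5)\le 2n-8$, equivalently $f(d):=d^2-(2n-5)d+(2n-8)\ge 0$. I would check that $f$ is an upward-opening quadratic with $f(1)=-2$ and $f(n-1)=-(n-3)(n-4)$, both strictly negative for $n\ge 5$. Since $1$ and $n-1$ therefore both lie strictly between the two real roots of $f$, the function is negative on the entire closed interval $[1,n-1]$, contradicting $f(d)\ge 0$ and finishing the proof.

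The main obstacle is being careful with the algebraic reduction; there is no conceptual difficulty once the setup via Observation \ref{obs:quadedges}(i) is in hand. The hypothesis $k\le n-4$ is sharp exactly because the tightest case is $d=1$, where the reduced inequality becomes $2n-6\le 2n-8$; any weaker bound on the number of deleted edges would already break at $n'=n-1$.
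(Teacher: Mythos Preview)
Your proof is correct and follows essentially the same route as the paper: both arguments use Observation~\ref{obs:quadedges}(i) twice, compare $m'$ for the hypothetical smaller quadrangulation against $\binom{n'}{2}$, and finish with an elementary inequality. The only difference is cosmetic packaging of the algebra: the paper introduces the auxiliary function $f(x)=x(x-5)/2=\binom{x}{2}-2x$ and proves the single estimate $f(n)-f(n')\ge n-3$ for $1\le n'\le n-1$ (which, unwound, is exactly your $(n-n')(n+n'-5)\ge 2n-6$), whereas you substitute directly and analyze the quadratic $f(d)=d^2-(2n-5)d+(2n-8)$ in $d=n-n'$. Both versions identify $n'=n-1$ as the tight case showing that the bound $n-4$ on deleted edges is sharp.
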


 \begin{proof}
 Let $f(x) = x(x-5)/2$.
 Suppose that $x \ge 5$.
 If $2\frac12 \le x' \le x-1$, then because $f$ is increasing on
$[2\frac12,\infty)$ we have $f(x)-f(x') \ge f(x)-f(x-1) = x-3$.
 If $1 \le x' \le 2\frac12$, then because $2\frac12 \le 5-x' \le x-1$ we
have $f(x)-f(x') = f(x)-f(5-x') \ge x-3$.
 Thus, $f(x)-f(x') \ge x-3$ whenever $1 \le x' \le x-1$.
 If $n$ is a nonnegative integer then $f(n) = \binom{n}{2} - 2n$.

 Now suppose $L$ has $n$ vertices, $m$ edges, and
a quadrangular embedding in $\Si$.
 Since at most $n-4$ edges of $K_n$ were deleted, $L$, and hence also
$\Si$, is connected.
 If we have another quadrangulation of $\Si$ with $n' \le n-1$ vertices
and $m'$ edges, then, since $m' = 2n'-2\ec(\Si)$ from Observation
\ref{obs:quadedges},
 \begin{eqnarray*}
 m' - \binom{n'}{2} &=& 2n' - 2\ec(\Si) - \binom{n'}{2} = -f(n') - 2\ec(\Si)
	= -f(n') + m - 2n \\
 &\ge& -f(n') + \binom{n}{2} -(n-4) - 2n = f(n)-f(n') - (n-4) \ge 1,
 \end{eqnarray*}
 proving that the other graph is not simple.
 \end{proof}

 Lemma \ref{lem:minquad} is sharp whenever $K_{n-1}$ has a quadrangular
embedding $\Phi$ of the appropriate orientability type (as in Theorems
\ref{thm:genusori} and \ref{thm:genusnon}).  Adding a new vertex of
degree $2$ adjacent to two opposite vertices of a face of $\Phi$ yields
a quadrangular embedding of a graph obtained from $K_n$ by deleting
$n-3$ edges, but this is not minimal.

 We can now apply Lemma \ref{lem:minquad} to Lemmas \ref{lem:compk2ori}
and \ref{lem:compk2non}, and to Theorems \ref{thm:compk4ori} and
\ref{thm:compk4non}.
 The orientable case of Corollary \ref{cor:minquadgs} is due to
Lawrencenko \cite[Theorem 2]{L13}.

 \begin{corollary}\label{cor:minquadgs}
 Let $k$ and $p$ be integers with $k \ge 4$ and $0 \le p \le k/4-1$.
 Suppose $G$ is obtained from $K_k$ by deleting $p$ edges.
 Then $\double{G}$ has both orientable and nonorientable quadrangular
embeddings that are minimal.
 Thus, minimal quadrangulations of the orientable surface of genus
 $k(k-3)/2-p+1$
 and of the nonorientable surface of genus
 $k^2-3k-2p+2$
 have order $2k$.
 \end{corollary}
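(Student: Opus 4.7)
The plan is to combine three ingredients already in the paper: existence of orientable and nonorientable quadrangular embeddings of $\double{G}$ via Lemmas~\ref{lem:graphical} and~\ref{lem:graphicalnon}; the minimality criterion of Lemma~\ref{lem:minquad}; and the Euler-formula calculation from Observation~\ref{obs:quadedges}(i). The corollary is essentially a bookkeeping exercise matching the edge deficit of $\double{G}$ inside $K_{2k}$ to the threshold in Lemma~\ref{lem:minquad}.

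First I would verify the hypotheses of the graphical-surface lemmas. Since we delete at most $p \le k/4 - 1$ edges from $K_k$ and $k \ge 4$, every vertex of $G$ still has degree at least $(k-1) - p > 0$, so $G$ has no isolated vertices; and $G$ retains at least $\binom{k}{2} - p$ edges, far exceeding $k-1$, so $G$ is connected and contains a cycle. Thus Lemma~\ref{lem:graphical} gives an orientable quadrangular embedding of $\double{G}$ on $S(G)$, and Lemma~\ref{lem:graphicalnon} gives a nonorientable one on $\tw{S}(G)$.

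Next I would check that Lemma~\ref{lem:minquad} applies to both embeddings. The graph $\double{G}$ has $2k$ vertices. Because $K_k[\,\ovl{K_2}\,]$ is $K_{2k}$ with a perfect matching of $k$ edges removed, and each edge deleted from $K_k$ removes exactly four edges from $\double{G}$, the graph $\double{G}$ is obtained from $K_{2k}$ by deleting $k + 4p'$ edges, where $p' \le p$ is the actual number of edges removed from $K_k$. The hypothesis $p \le k/4 - 1$ then yields
\[
k + 4p' \le k + 4p \le 2k - 4,
\]
so Lemma~\ref{lem:minquad} applies with $n = 2k \ge 8$, and both quadrangular embeddings are minimal.

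Finally, to identify the surfaces named in the corollary, take $p' = p$ (delete exactly $p$ edges from $K_k$), so $\double{G}$ has $m = 2k(k-1) - 4p$ edges on $n = 2k$ vertices. Observation~\ref{obs:quadedges}(i) gives $\chi(\Si) = n - m/2 = 3k - k^2 + 2p$, so the orientable genus is $(2 - \chi)/2 = k(k-3)/2 - p + 1$ and the nonorientable genus is $2 - \chi = k^2 - 3k - 2p + 2$, exactly as claimed. No step here is really an obstacle; the only delicate point is that the bound $p \le k/4 - 1$ is precisely calibrated so that the deleted edges fit under the $2k - 4$ threshold of Lemma~\ref{lem:minquad}, which is what makes the whole argument go through.
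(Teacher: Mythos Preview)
Your proof is correct and follows essentially the same approach as the paper: invoke Lemmas~\ref{lem:graphical} and~\ref{lem:graphicalnon} for existence, count that $\double{G}$ misses at most $k+4p \le 2k-4$ edges of $K_{2k}$ so that Lemma~\ref{lem:minquad} applies with $n=2k$, and compute the genera via Observation~\ref{obs:quadedges}(i). One tiny quibble: having more than $k-1$ edges does not by itself force $G$ to be connected, but connectedness of $G$ is not actually required by Lemmas~\ref{lem:graphical} or~\ref{lem:graphicalnon} (and Lemma~\ref{lem:minquad} handles connectedness of $\double{G}$ internally), so the argument stands.
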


 \begin{proof}
 Deleting $p$ edges from $K_k$ does not create isolated vertices or
destroy all cycles.
 Thus, by Lemmas \ref{lem:compk2ori} and \ref{lem:compk2non},
$\double{G}$ has orientable and nonorientable quadrangular embeddings.
 These have order $2k$, and are minimal by Lemma \ref{lem:minquad} since we
get $\double{G}$ by deleting $k+4p \le 2k-4$ edges from $K_{2k}$.
 We compute the genera of the surfaces from $m = 2n-2\ec(\Si)$.
 \end{proof}

 \begin{corollary}\label{cor:minquad}
 Let $\ell$ and $q$ be integers with $\ell \ge 1$ and $0 \le q \le
(\ell-1)/2$.
 Suppose $G$ is obtained from $K_{2\ell}$ by deleting $q$ edges.
 Then $G[K_4]$ has both orientable and nonorientable
quadrangular embeddings that are minimal.
 Thus, minimal quadrangulations of the orientable surface of genus
$8\ell^2-5\ell-4q+1$ and of the nonorientable surface of genus
$16\ell^2-10\ell-8q+2$ have order $8\ell$.
 \end{corollary}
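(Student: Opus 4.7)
The plan is to mimic the proof of Corollary~\ref{cor:minquadgs}, now using Theorems~\ref{thm:orient} and~\ref{thm:nonorientgs} in place of Lemmas~\ref{lem:graphical} and~\ref{lem:graphicalnon} to supply the embeddings, and Lemma~\ref{lem:minquad} to certify minimality. The first step is to check the hypotheses of those theorems for $G$. Since $q\le(\ell-1)/2$, every vertex of $G$ has degree at least $2\ell-1-q\ge(3\ell-1)/2\ge\ell$, so for $\ell\ge 2$ Dirac's theorem gives $G$ a Hamilton cycle. In particular $G$ is connected, has a perfect matching, and contains a cycle, and Theorems~\ref{thm:orient} and~\ref{thm:nonorientgs} immediately yield the desired orientable and nonorientable quadrangular embeddings of $G[K_4]$. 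The degenerate case $\ell=1$ forces $q=0$ and $G=K_2$, making $G[K_4]=K_8$; here the graphical-surface theorems do not directly apply (no cycle in $G$), but Theorems~\ref{thm:mainori} and~\ref{thm:mainnon} provide the two embeddings.

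Next I verify minimality. Each non-edge $uv$ of $G$ blocks exactly the $16$ edges between the $K_4$-blocks indexed by $u$ and $v$ in $G[K_4]$, so $G[K_4]$ is obtained from $K_{8\ell}$ by deleting at most $16q\le 8\ell-8\le 8\ell-4$ edges. Since $n=8\ell\ge 5$, Lemma~\ref{lem:minquad} applies and both embeddings are minimal. To read off the genera, fix $G$ to have exactly $q$ edges removed, so $G[K_4]$ has $8\ell$ vertices and $12\ell+16(2\ell^2-\ell-q)=32\ell^2-4\ell-16q$ edges. Observation~\ref{obs:quadedges}(i) then gives Euler characteristic $\chi=8\ell-(32\ell^2-4\ell-16q)/2 = 10\ell-16\ell^2+8q$, which converts to orientable genus $1-\chi/2 = 8\ell^2-5\ell-4q+1$ and nonorientable genus $2-\chi = 16\ell^2-10\ell-8q+2$, as claimed.

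Essentially all of the difficult work has been done by the earlier theorems and by Lemma~\ref{lem:minquad}, so I do not expect a genuine obstacle; the proof is bookkeeping: verifying the perfect-matching/cycle hypotheses, counting the missing edges of $G[K_4]$ against the $n-4$ threshold, and performing the Euler-characteristic arithmetic. The only point requiring any real care is the boundary case $\ell=1$, where $G=K_2$ has no cycle and one must appeal to Theorem~\ref{thm:mainnon} instead of Theorem~\ref{thm:nonorientgs} for the nonorientable embedding.
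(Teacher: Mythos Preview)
Your proof is correct and follows essentially the same route as the paper's: handle $\ell=1$ separately via Theorems~\ref{thm:mainori} and~\ref{thm:mainnon}, verify for $\ell\ge 2$ that $G$ has a perfect matching and a cycle so that Theorems~\ref{thm:orient} and~\ref{thm:nonorientgs} apply, then count the $16q\le 8\ell-8$ missing edges against Lemma~\ref{lem:minquad} and read off the genera from Observation~\ref{obs:quadedges}(i). The only cosmetic difference is that you establish the perfect matching and cycle via Dirac's theorem (minimum degree $\ge \ell$), whereas the paper notes that the deleted edges touch at most $\ell-1$ vertices so that $K_{2\ell}-E(K_{\ell-1})\subseteq G$.
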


 \begin{proof}
 If $\ell=1$ then $q=0$ and $G[K_4] = K_{8k}$, so orientable and
nonorientable quadrangular embeddings exist
by Theorems \ref{thm:genusori} and \ref{thm:genusnon}.
 If $\ell \ge 2$ then the $q$ edges deleted from $K_{2\ell}$ are
incident with at most $\ell-1$ vertices, so $G$ has
$K_{2\ell}-E(K_{\ell-1})$ as a subgraph, and hence has a perfect
matching and a cycle.
 Thus, by Theorems \ref{thm:compk4ori} and \ref{thm:compk4non}, $G[K_4]$
has the required embeddings.

 For all $\ell$ these embeddings have order $8\ell$, and are minimal
by Lemma \ref{lem:minquad} since we get $G[K_4]$ by deleting $16q <
8\ell-4$ edges from $K_{8\ell}$.
 We compute the genera of the surfaces from $m = 2n-2\ec(\Si)$.
 \end{proof}

 The simple quadrangulations described in Corollaries \ref{cor:quadnon}
and \ref{cor:quadori} are also minimal.
 The simple quadrangulations with $\ell = n+1$ vertices are embeddings
of $K_\ell$ with $\ell-4$, $\ell-5$ or $\ell-6$ edges deleted, and so
are minimal by Lemma \ref{lem:minquad}.

 \begin{corollary}\label{cor:minquadds}
 If $n \equiv 2$ or $3 \pmod 4$, $n \ge 6$ and $k =
2+\lceil n(n-5)/4 \rceil$, then a minimal quadrangulation of $N_k$ has
$n+1$ vertices. If $n \equiv 1$, $2$, $3$, $4$, $6$ or $7
\pmod 8$, $n \ge 7$ and $h = 1 + \lceil n(n-5)/8 \rceil$, then a minimal
quadrangulation of $S_h$ has $n+1$ vertices.
 \end{corollary}

 There is some overlap here between the conclusions about the order of
minimal quadrangulations.  The case of Corollary \ref{cor:minquad} with
$\ell/4 \le q \le (\ell-1)/2$ is also covered by Corollary
\ref{cor:minquadgs} with $k=4\ell$ and $p=4q-\ell$.  Some (but not all)
cases of Corollary \ref{cor:minquadds} are also covered by Corollary
\ref{cor:minquadgs}.

 \section{Conclusion}
 \label{sec:conclusion}

 We give some final remarks.

 \smallskip\noindent
 (1) Hartsfield and Ringel \cite{HRori,HRnon} defined quadrangulations
more strictly than we do: they insisted that two distinct faces share at
most one edge and at most three vertices.
 For an embedding of a simple graph, this is equivalent to being
face-simple.
 The reason for this restriction is unclear.  Perhaps they wished to
make the embedding ``polyhedral''.
 However, an embedding is now usually considered polyhedral if it is a
{\em $3$-representative\/} (every noncontractible simple closed curve in
the surface intersects the graph in at least three points) embedding of
a $3$-connected graph.
 A quadrangular embedding of $K_n$ is never polyhedral in this sense:
given a face $(uvwx)$, the edge $uw$ is part of the boundary of some
other face, and using these two faces we can find a simple closed curve
intersecting the graph at just $u$ and $w$, which must be
noncontractible.
 In any case, all our embeddings, with a few small exceptions, are
face-simple and so satisfy Hartsfield and Ringel's definition.

 \smallskip\noindent
 (2) It may be possible to carry out the graphical surface/voltage graph
construction from the proof of Theorem \ref{thm:compk4ori} with
non-perfect matchings $M$ of $G$ as well as with perfect matchings, to
give orientable and nonorientable quadrangular embeddings of some graphs
$L$ with $G[\,\ovl{K_4}\,] \subseteq L \subseteq G[K_4]$.
 This could provide some further examples of minimal quadrangulations.

 \smallskip\noindent
 (3) Our constructions have a lot of flexibility, particularly the
constructions from Subsection \ref{ss:graphical} and Section
\ref{sec:embgsvg}.  
 The graphical surface embeddings of $\double{G}$ in $S(G)$ (with
twisted tubes allowed) require a cyclic order of tubes around the
equator of each sphere, and a designation of which tubes are to be
twisted. (This corresponds to choosing an arbitrary embedding of $G$,
described by a rotation system with edge signatures.)
 There are two ways to run the edges along each tube.
 For Theorem \ref{thm:compk4ori} or \ref{thm:compk4non} we may choose an
arbitrary perfect matching $M$ of $G$, and  for each edge $uv$ of $M$ we
may choose one of four possible edges along the corresponding tube to
determine $Q_u$ and $Q_v$.
 We also have two ways to assign the voltages for the digons
of $J$ running along each tube. 

 It therefore seems natural to ask whether our techniques can be used to
provide useful lower bounds on the number of nonisomorphic quadrangular
embeddings of $K_n$.
 \appendix
 \section{Appendix: Small cases}
 \label{sec:small}

 In this appendix we provide the embeddings for the bases of the
inductive proofs in Section \ref{sec:embdiamsum}.

 \subsection{Nonorientable embeddings}
 \label{smallnon}

 At left in Figure \ref{fig:smallnon66} is a face-simple quadrangular
embedding $\tw\Psi_6^-$ of $K_6-e$ ($e=01$) in $N_3$, which is used for
constructing embeddings related to $K_5$.
 This is shown as a polygon with labeled vertices, indicating how edges
are to be identified around the boundary.  Nonorientability follows from
the existence of edges used twice in the same direction around the outer
boundary.

 We give nonorientable embeddings $\nk_n$ for $n \in \{4,6,7,8\}$ in
which all faces are $C_4$-faces except for possibly one $C_6$-face. 
They are all closed-2-cell and all except $\nk_4$ are face-simple.
 The embedding $\nk_4$ of $K_4$ is obtained by taking each of the three
hamilton $4$-cycles in $K_4$ as a face boundary.
 The embedding $\nk_6$ of $K_6$ with six $C_4$-faces and one $C_6$-face
is generated by the voltage graph shown at center in Figure
\ref{fig:smallnon66}.  The loop of voltage $3$ generates digons, which
are replaced by single edges.  A polygon representation of $\nk_6$ is
also given at right in Figure \ref{fig:smallnon66}.
 The embeddings $\nk_7$ of $K_7$ and $\nk_8$ of $K_8$ are shown at
left and right, respectively, in Figure \ref{fig:smallnon78}.

 \subsection{Orientable embeddings}
 \label{smallori}

 Below are orientable embeddings in which all faces are $C_4$-faces
except for at most two specified faces.
 These embeddings are represented as rotation systems using vertices 0,
1, 2, $\dots$, 9, a, b, c, d.  All embeddings are even-faced except for
$\ok_6$.  All are face-simple and closed-2-cell except $\ok_4$.

 \begin{figure}[tb]\refstepcounter{figure}\label{fig:smallnon66}
 \begin{center}

 \includegraphics[scale=0.9]{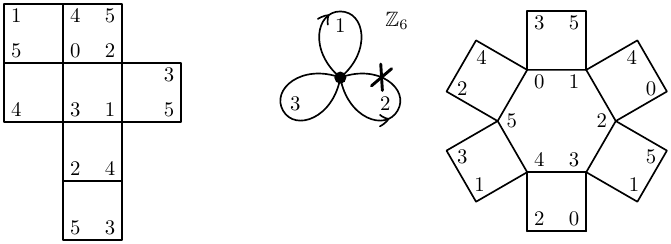}

 \smallskip
 {Figure~\ref{fig:smallnon66}: Nonorientable embeddings
 $\tw\Psi_6^-$ (left) and $\nk_6$ (center and right).}

 \end{center}
 \end{figure}

 \begin{figure}[tb]\refstepcounter{figure}\label{fig:smallnon78}
 \begin{center}

 \includegraphics[scale=0.9]{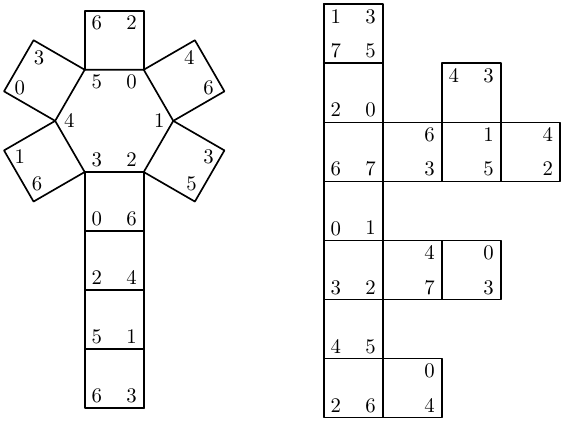}

 \smallskip
 {Figure~\ref{fig:smallnon78}: Nonorientable embeddings
 $\nk_7$ (left) and $\nk_8$ (right).}

 \end{center}
 \end{figure}


 \noindent
 \rotsys{%
  $\ok_4$ of $K_4$ \\
  \hnull\quad with $8$-face \\
  \hnull\quad $(0 1 2 3 1 0 3 2%
	)$: \\
  \noalign{\vskip3pt}
  0.  1 3 2 \\
  1.  0 2 3 \\
  2.  0 1 3 \\
  3.  0 2 1 \\
 }
 \hskip 12mm plus.01fil
 \rotsys{%
  $\ok_5$ of $K_5$: \\
  \noalign{\vskip3pt}
  0.  1 4 2 3 \\
  1.  0 4 3 2 \\
  2.  0 3 4 1 \\
  3.  0 1 2 4 \\
  4.  0 2 1 3 \\
 }
 \hskip 12mm plus.01fil
 %
 \rotsys{%
  $\ok_6$ of $K_6$ \\
  \hnull\quad with $C_5$-faces \\
  \hnull\quad $(0 1 2 3 4), (0 3 1 4 2%
	)$: \\
  \noalign{\vskip3pt}
  0.  1 2 3 5 4 \\
  1.  0 2 3 4 5 \\
  2.  0 5 1 3 4 \\
  3.  0 1 5 2 4 \\
  4.  0 1 2 5 3 \\
  5.  0 4 3 2 1 \\
 }
 \hskip 12mm plus.01fil
 %
 \rotsys{%
  $\ok_7$ of $K_7$ \\
  \hnull\quad with $C_6$-face \\
  \hnull\quad $(0 1 2 3 4 5%
	)$: \\
  \noalign{\vskip3pt}
  0.  1 4 3 2 6 5 \\
  1.  0 2 5 3 4 6 \\
  2.  0 6 1 3 5 4 \\
  3.  0 1 2 4 5 6 \\
  4.  0 6 2 3 5 1 \\
  5.  0 3 2 1 6 4 \\
  6.  0 4 1 5 2 3 \\
 }
 \hskip 12mm plus.01fil
 %
 \rotsys{%
  $\ok_8$ of $K_8$: \\
  \noalign{\vskip3pt}
  0.  1 7 4 3 5 2 6 \\
  1.  0 7 2 5 6 4 3 \\
  2.  0 7 3 4 6 1 5 \\
  3.  0 7 1 6 5 2 4 \\
  4.  0 6 1 2 5 3 7 \\
  5.  0 4 3 1 6 2 7 \\
  6.  0 5 2 1 3 4 7 \\
  7.  0 3 4 2 5 1 6 \\
 }
 \hskip 12mm plus.01fil
 %
 \rotsys{%
  $\ok_9$ of $K_9$ with $C_8$-face \\
  \hnull\quad $(0 1 2 3 4 5 6 7%
	)$: \\
  \noalign{\vskip3pt}
  0.  1 4 6 2 8 5 3 7 \\
  1.  0 2 6 4 3 7 5 8 \\
  2.  0 8 5 7 6 4 1 3 \\
  3.  0 8 2 4 6 1 7 5 \\
  4.  0 8 6 2 3 5 7 1 \\
  5.  0 7 3 2 1 4 6 8 \\
  6.  0 1 4 5 7 3 2 8 \\
  7.  0 5 8 2 3 4 1 6 \\
  8.  0 3 5 4 1 2 6 7 \\
 }
 \hskip 12mm plus.01fil
 %
 \rotsys{%
  $\ok_{10}$ of $K_{10}$ with $C_{10}$-face \\
  \hnull\quad $(0 1 2 3 4 5 6 7 8 9%
	)$: \\
  \noalign{\vskip3pt}
  0.  1 5 6 4 7 3 8 2 9 \\
  1.  0 2 4 3 6 7 5 8 9 \\
  2.  0 9 8 7 6 1 3 5 4 \\
  3.  0 9 1 8 2 4 5 7 6 \\
  4.  0 9 2 3 5 7 8 1 6 \\
  5.  0 9 4 6 7 1 3 2 8 \\
  6.  0 8 3 5 7 4 2 1 9 \\
  7.  0 6 8 5 3 1 2 4 9 \\
  8.  0 6 5 3 4 2 1 7 9 \\
  9.  0 4 6 3 7 5 1 2 8 \\
 }
 \hskip 12mm plus.01fil
 %
 \rotsys{%
  $\ok_{11}$ of $K_{11}$ with $C_{10}$-face \\
  \hnull\quad $(0 1 2 3 4 5 6 7 8 9%
	)$: \\
  \noalign{\vskip3pt}
  0.  1 5 7 3 a 6 4 8 2 9 \\
  1.  0 2 7 9 6 4 5 8 3 a \\
  2.  0 a 1 3 5 6 9 8 4 7 \\
  3.  0 a 7 2 4 1 9 8 6 5 \\
  4.  0 a 3 5 2 6 8 1 9 7 \\
  5.  0 a 3 2 7 4 6 1 8 9 \\
  6.  0 9 1 2 3 4 8 5 7 a \\
  7.  0 9 2 5 3 6 8 4 1 a \\
  8.  0 7 9 1 6 2 5 4 3 a \\
  9.  0 7 5 2 1 4 6 a 3 8 \\
  a.  0 5 1 4 6 3 7 2 8 9 \\
 }
 \hskip 12mm plus.01fil
 %
 \rotsys{%
  $\ok_{12}$ of $K_{12}$ with $C_{8}$-face \\
  \hnull\quad $(0 1 2 3 4 5 6 7%
	)$: \\
  \noalign{\vskip3pt}
  0.  1 a 2 9 3 8 4 b 6 5 7 \\
  1.  0 2 b 6 5 8 4 7 3 a 9 \\
  2.  0 b 7 5 1 3 8 4 6 9 a \\
  3.  0 b 7 8 5 9 6 2 4 1 a \\
  4.  0 b 1 a 3 5 8 2 9 6 7 \\
  5.  0 b 2 8 7 3 4 6 9 1 a \\
  6.  0 a 1 9 4 2 8 3 5 7 b \\
  7.  0 a 1 b 4 9 5 3 8 2 6 \\
  8.  0 a 1 9 2 6 4 3 5 7 b \\
  9.  0 a 1 2 4 8 5 3 7 6 b \\
  a.  0 9 2 1 4 8 3 7 5 6 b \\
  b.  0 7 4 8 3 9 1 5 6 2 a \\
 }
 \hskip 12mm plus.01fil
 %
 \rotsys{%
  $\ok_{14}$ of $K_{14}$ with $C_{6}$-face \\
  \hnull\quad $(0 1 2 3 4 5%
	)$: \\
  \noalign{\vskip3pt}
  0.  1 7 a 4 b 3 c 2 d 8 6 9 5 \\
  1.  0 2 6 7 a 3 b 9 4 8 5 c d \\
  2.  0 d c 1 3 9 b a 4 8 5 7 6 \\
  3.  0 d 2 4 6 9 1 c 5 7 8 a b \\
  4.  0 d 6 8 7 9 3 5 a 2 b 1 c \\
  5.  0 d 7 a 8 3 1 c 2 b 9 6 4 \\
  6.  0 d 2 a 5 3 9 4 8 7 b 1 c \\
  7.  0 d 6 9 1 b 4 3 8 5 a 2 c \\
  8.  0 c 1 b 2 a 7 5 3 4 9 6 d \\
  9.  0 c 1 a 7 8 6 4 b 3 5 2 d \\
  a.  0 c 1 9 b 2 3 5 8 4 6 7 d \\
  b.  0 c 3 2 5 8 4 a 1 9 7 6 d \\
  c.  0 b 4 9 6 2 1 3 a 7 5 8 d \\
  d.  0 6 8 4 a 5 9 3 b 7 1 2 c \\
 }

 \longonly{%
 
 \section{Appendix: Analysis of \texorpdfstring{$K_6$ in $S_2$}{K6 in S2}}
 \label{sec:k6}

 In this appendix we prove Proposition \ref{prop:K6}, and show
that $K_6$ has no even-faced general embedding in $S_2$.

 In an embedding the faces around a vertex must form a {\em proper
rotation}, a cyclic sequence closing up so that the vertex has a
neighborhood homeomorphic to an open disk.  If a potential set of faces
incident with a vertex $v$ close up in a cyclic sequence without including
all edges incident with $v$, we say the rotation at $v$ is {\em
improperly closed}.

 \begin{lemma}\label{lem:k6c6}
 $K_6$ has no orientable embedding in which every face is a $C_4$-face
except for one $C_6$-face.
 \end{lemma}

 \begin{proof}
 Assume that such an embedding exists.
 Since the embedding is orientable we may assign consistent orientations
to all the faces, and describe each face using a cyclic list of vertices
following the orientation (it is not equivalent to its reverse).

 We will label the vertices of $K_6$ by elements of
$\mZ_6=\{0,1,2,3,4,5\}$. 
 Each edge has an obviously defined {\it length} of $1$, $2$ or $3$
depending on $j-i$.
 Each arc (directed edge) from $i$ to $j$ has {\it length\/} $j-i \in
\mZ_6$ which we will write as an element of the set $\{-2,-1,1,2,3\}$
(where $-2=4$, $-1=5$).  
 Each arc is used exactly once by our embedding.
 Without loss of generality we may label the vertices so that the
$C_6$-face is $Z=(054321)$, using all the arcs of length $-1$.  Therefore
the remaining $6$ $C_4$-faces use 6 arcs of each length $1$, $2$, $-2$
and $3$.  Since no $C_4$ in $K_6$ can use more than two arcs of length
$\pm 2$, and altogether they use $12$ arcs of length $\pm 2$, each of
the $6$ $C_4$ faces must use exactly two arcs of length $\pm 2$.

 Therefore the cyclic pattern of lengths in each $C_4$-face (following
the arcs in their positive direction) must be one of $6$ possibilities:
$A=(1,1,2,2)$, $B=(1,2,1,2)$, $C=(1,-2,-2,3)$, $D=(1,3,-2,-2)$,
$E=(1,-2,3,-2)$ or $F=(2,3,-2,3)$. 
 For each pattern $P$ let $P_i$ be the potential face starting at vertex
$i$ and following pattern $P$; for example, $C_3 = (3420)$ ($3+1=4$,
$4-2=2$, $2-2=0$, $0+3=3$).

 Any face $A_i$ together with $Z$ improperly closes the rotation at
$i+1$, so there are no faces of pattern $A$.
 Let $n_F$ be the number of faces of pattern $F$ and $n_{CDE}$ the
number of faces of pattern $C$, $D$ or $E$.  Counting the arcs of length
$-2$ we have $2n_{CDE}+n_F=6$.  Counting the arcs of length $3$ we have
$n_{CDE}+2n_F=6$.  Therefore $n_{CDE}=n_F=2$.

 Consider the two faces of pattern $F$.  They must share an
edge of length $3$, which we may assume is $03$.  The faces of pattern
$F$ using this edge are $F_3$ and $F_4$, which use arc $03$, and $F_0$
and $F_1$, which use arc $30$.  We must have one face that uses arc $03$
and one that uses arc $30$.  Moreover, we cannot have $F_3$ and $F_0$
because they are reverses of each other, giving improper rotations at
all of their vertices if they occur together in an embedding. 
Similarly, we cannot have $F_4$ and $F_1$, because they are reverses of
each other.  So we must have $F_3$ and $F_1$, or $F_4$ and $F_0$. 
Without loss of generality we assume we have $F_4$ and $F_0$; if we have
$F_3$ and $F_1$ we just add $3$ to all the vertex labels and they become
$F_0$ and $F_4$.

 Now consider the arc $01$, which must belong to some face.
 The possible faces are $B_0=B_3$, $C_0$, $D_0$ or $E_0$.
 If $B_0$ is a face then the arc $45$ is used by both $B_0$ and $F_4$.
 If $C_0$ is a face then the arcs $53$ and $30$ are used by both $C_0$
and $F_0$, a contradiction.
 If $D_0$ is a face then the arc $14$ is used by both $D_0$ and $F_4$, a
contradiction.
 If $E_0$ is a face then $E_0$ and $F_0$ improperly close up the
rotation at vertex $2$, which is a contradiction.

 Hence all situations lead to a contradiction, so, as claimed, there is
no such embedding of $K_6$.
 \end{proof}

 \begin{lemma}\label{lem:k6q5}
 Every cellular orientable embedding of $K_6$ in which some vertex is
incident with five $C_4$-faces must have five
$C_4$-faces and two $C_5$-faces.
 \end{lemma}

 \begin{proof}
 Label the vertices of $K_6$ by $\infty$ and the elements of
$\mZ_5=\{0,1,2,3,4\}$.  Without loss of generality we may suppose that
$\infty$ has clockwise rotation $(0,1,2,3,4)$, and that for $i \in
\mZ_5$ there is a $4$-cycle face $(\infty,i,a_{i+3},i+1)$ (writing faces
also in clockwise order; this labelling makes $a_i$ `opposite' to $i$ in
the face neighborhood around $\infty$).  For each $i \in \mZ_5$ we must
have $a_i \notin \{\infty, i-3, i-2\}$, so $a_i \in \{i-1,i,i+1\}$ for
each $i$; let the rotation around $i$ be $(\infty, a_{i+2}, b_{i1},
b_{i2}, a_{i+3})$.  Since the same vertex cannot occur twice in the
rotation around $i-2$, $a_i \ne a_{i+1}$ for each $i$.

 Suppose first that $a_i = a_j$ for some $i \ne j$.  The only way this
can happen is if $j = i \pm 2$ and $a_i$ is the number between $i$ and
$j$.
 Without loss of generality suppose that $a_4=a_1=0$.  Then the rotation
around $0$ contains the sequence $4,3$ since $a_1=0$ and the sequence
$2,1$ since $a_4=0$.  Since $a_2 \ne 4$, the rotation around $0$ must be
$(\infty, a_2=2, 1, 4, a_3=3)$.  Since $a_3=3$, the rotation around $3$
contains the sequence $1,0$; but we already know that the rotation
around $3$ contains the sequence $b_{32}, a_1=0$ and hence $b_{32}=1$. 
This means that $a_0 \ne 1$.  By similar reasoning, $b_{21}=4$ and hence
$a_0 \ne 4$.  Also $a_0 \ne a_4 = 0$.  But now there are no possible
values for $a_0$, which is a contradiction.  So we know that $a_i \ne
a_j$ when $i \ne j$.  Hence each $j \in \mZ_5$ occurs exactly once as
some $a_i$.

 Suppose that $a_0=1$.
 The rotation around $a_0=1$ contains the
sequence $3,2$, so we have $(3,2) = (a_3,b_{11})$, $(b_{11},b_{12})$ or
$(b_{12},a_4)$.
 We cannot have $a_4=2$ so $(3,2)$ cannot be $(b_{12}, a_4)$. 
 If $(3,2)= (a_3,b_{11})$ then both $1=a_0$ and $3=a_3$ occur as vertices
$a_i$, so we must have $a_2=2$.  Then the rotation around $a_2=2$ contains
the sequence $0,4$.  Now $a_4 \ne a_3=3$ so $a_4 \in \{0,4\}$ and the
rotation around $2$ contains the sequence $\infty, a_4$, so we cannot
have $a_4=4$ and we must have $a_4=0$.  But now $1,2,0$ have all been
used as vertices $a_i$, so there is no valid value for $a_1$, a
contradiction.
 Hence we must have $(3,2)=(b_{11},b_{12})$.
 But then $a_3$ cannot be equal to either $3$ or $2$, so $a_3=4$.

 Generalizing the above, we have shown that $a_i = i+1$ implies that
$a_{i+3}=i+4$, and $b_{i+1,1}=i+3$, $b_{i+1,2}=i+2$.  Repeating this
reasoning determines the rotation around every vertex $i$ as being
$(\infty, i+3, i+2, i+1, i-1)$.  This gives an embedding of $K_6$ with
five $C_4$-faces and two $C_5$-faces $(03142)$ and $(01234)$, in
clockwise order.

 The case where $a_i  = i-1$ for some $i$ is symmetric.  So we need only
deal with the case where $a_i=i$ for all $i$.  However, this is
impossible: for example, it leads to the rotation around vertex $0$
containing $a_3,\infty=3,\infty$ but also the rotation around $a_0=0$
containing $3,2$.

 Thus, the only possible situations lead to the embedding specified.
 \end{proof}

 \begin{proof}[Proof of Proposition \ref{prop:K6}]
 Assume there is a general embedding $\Phi$ of $K_6$ in $S_2$.
 As noted in Section \ref{sec:embdiamsum}, $\Phi$ must be cellular with
six $C_4$-faces and one $6$-face, or five $C_4$-faces and two $C_5$-faces.

 Suppose there is a $6$-face.
 By Lemma \ref{lem:k6c6} the $6$-face is not a $C_6$-face.
 So the $6$-face has fewer than $6$ distinct vertices, and thus there is
some vertex all of whose adjacent faces are $C_4$-faces.  But then, by
Lemma \ref{lem:k6q5}, $\Phi$ does not have a $6$-face, a contradiction.

 Therefore the embedding has five $C_4$-faces and two $C_5$-faces.  The
embedding $\ok_6$ in Appendix \ref{sec:small}, or the embedding found in
the proof of Lemma \ref{lem:k6q5}, shows that such an embedding exists.
 \end{proof}

 }
 \section*{Acknowledgements}

 The authors thank Joan Hutchinson for information regarding
\cite{Ha94q}.

 Wenzhong Liu was partially supported by Fundamental Research Funds for
the Central Universities (NZ2015106) and NSFC grant (11471106).
 M.~N.~Ellingham was partially supported by National Security Agency
grant H98230--13--1--0233 and Simons Foundation award no.~429625.
 Dong Ye was partially supported by Simons Foundation award no.~359516.
 Xiaoya Zha was partially supported by National Security Agency grant
H98230--13--1--0216.

\end{document}